\documentclass[11pt]{amsart}
 \usepackage{amsaddr}

\usepackage{latexsym}
\usepackage{fullpage}
\usepackage{amssymb}
\usepackage{amscd}
\usepackage{float}
\usepackage{graphicx}
\usepackage{amsfonts}
\usepackage{pb-diagram}
\usepackage{amsmath,amscd}
\usepackage{subcaption}

\setlength{\topmargin}{0cm}
\setlength{\textheight}{22cm}
\setlength{\textwidth}{16cm}
\setlength{\oddsidemargin}{-0.1cm}
\setlength{\evensidemargin}{-0.1cm}

\newtheorem{thm}{Theorem}
\newtheorem{cor}{Corollary}
\newtheorem{lemma}{Lemma}
\newtheorem{prop}{Proposition}
\newtheorem{defn}{Definition}
\newtheorem{remark}{Remark}

\newtheorem{nt}{Notation}

\begin{document}

\title[The Kauffman bracket skein module of $S^1\times S^2$ via braids]
  {The Kauffman bracket skein module of $S^1\times S^2$ via braids}

\author{Ioannis Diamantis}
\address{Department of Data Analytics and Digitalisation,
School of Business and Economics, Maastricht University,
P.O.Box 616, 6200 MD, Maastricht,
The Netherlands.
}
\email{i.diamantis@maastrichtuniversity.nl}

\setcounter{section}{-1}

\begin{abstract}
In this paper we present two different ways for computing the Kauffman bracket skein module of $S^1\times S^2$, ${\rm KBSM}\left(S^1\times S^2\right)$, via braids. We first extend the universal Kauffman bracket type invariant $V$ for knots and links in the Solid Torus ST, which is obtained via a unique Markov trace constructed on the generalized Temperley-Lieb algebra of type B, to an invariant for knots and links in $S^1\times S^2$. We do that by imposing on $V$ relations coming from the {\it braid band moves}. These moves reflect isotopy in $S^1\times S^2$ and they are similar to the second Kirby move. We obtain an infinite system of equations, a solution of which, is equivalent to computing ${\rm KBSM}\left(S^1\times S^2\right)$. We show that ${\rm KBSM}\left(S^1\times S^2\right)$ is not torsion free and that its free part is generated by the unknot (or the empty knot). We then present a diagrammatic method for computing ${\rm KBSM}\left(S^1\times S^2\right)$ via braids. Using this diagrammatic method we also obtain a closed formula for the torsion part of ${\rm KBSM}\left(S^1\times S^2\right)$.

\smallbreak
\bigbreak

\noindent 2020 {\it Mathematics Subject Classification.} 57K31, 57K14, 20F36, 20F38, 57K10, 57K12, 57K45, 57K35, 57K99, 20C08.

\end{abstract}

\keywords{skein module, Kauffman bracket, braids, solid torus, $S^1\times S^2$, lens spaces, mixed links, mixed braids, braid groups of type B, generalized Hecke algebra of type B, generalized Temperley-Lieb algebra of type B, torsion}

\maketitle

\tableofcontents

\section{Introduction and overview}\label{intro}

Skein modules of $3$-manifolds are generalizations of link invariants in $S^3$ to link invariants in arbitrary $3$-manifolds. They were introduced by Turaev \cite{Tu} and Przytycki \cite{P} and they have become very important algebraic tools in the study of 3-manifolds, since their properties reflect geometric and topological information about them. Skein modules of 3-manifolds are modules composed of linear combinations of links in the 3-manifolds, modulo some properly chosen (local) skein relations. In this paper we compute the {\it Kauffman bracket skein module} of $S^1\times S^2$, KBSM($S^1\times S^2$), namely, the skein module based on the Kauffman bracket skein relation 
\[
L_+-AL_{0}-A^{-1}L_{\infty}
\] 
\noindent where $L_{\infty}$ and $L_{0}$ are represented schematically by the illustrations in Figure~\ref{skein}. 

\smallbreak

The computation of KBSM($S^1\times S^2$) is equivalent to constructing all possible analogues of the Jones polynomial for knots and links in
$S^1\times S^2$, since the linear dimension of KBSM($S^1\times S^2$) means the number of independent Kauffman bracket-type invariants defined on knots and links in $S^1\times S^2$. In other words, KBSM($S^1\times S^2$) yields all possible isotopy invariants of knots which satisfy the Kauffman bracket skein relation.

\smallbreak

The precise definition of KBSM is as follows:

\begin{defn}\rm
Let $M$ be an oriented $3$-manifold and $\mathcal{L}_{{\rm fr}}$ be the set of isotopy classes of unoriented framed links in $M$. Let $R=\mathbb{Z}[A^{\pm1}]$ be the Laurent polynomials in $A$ and let $R\mathcal{L}_{{\rm fr}}$ be the free $R$-module generated by $\mathcal{L}_{{\rm fr}}$. Let $\mathcal{S}$ be the ideal generated by the skein expressions $L_+-AL_{0}-A^{-1}L_{\infty}$ and $L \bigsqcup {\rm O} - (-A^2-A^{-2})L$. Note that blackboard framing is assumed and that $L \bigsqcup {\rm O}$ stands for the union of a link $L$ and the trivially framed unknot in a ball disjoint from $L$. 

\begin{figure}[H]
\begin{center}
\includegraphics[width=2.1in]{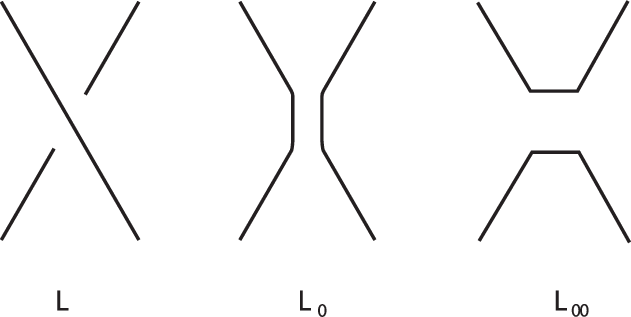}
\end{center}
\caption{The links $L$, $L_{0}$ and $L_{\infty}$ locally.}
\label{skein}
\end{figure}

\noindent Then the {\it Kauffman bracket skein module} of $M$, KBSM$(M)$, is defined to be:

\begin{equation*}
{\rm KBSM} \left(M\right)={\raise0.7ex\hbox{$
R\mathcal{L}_{{\rm fr}} $}\!\mathord{\left/ {\vphantom {R\mathcal{L_{{\rm fr}}} {\mathcal{S} }}} \right. \kern-\nulldelimiterspace}\!\lower0.7ex\hbox{$ S  $}}.
\end{equation*}

\end{defn}

The importance of the Kauffman bracket skein module of a 3-manifold $M$ lies in the fact that it detects the presence of non-separating 2-spheres and tori embedded in $M$. On the level of Kauffman bracket skein modules, a non-separating 2-sphere in $M$ is detected by the existence of torsion in KBSM($M$).

\bigbreak

We consider $S^1\times S^2$ as being obtained from $S^3$ by integral surgery along the unknot with coefficient zero, that is, $S^1\times S^2$ is a special type of lens spaces $L(p, q)$ for $p=0$ and $q=1$ ($L(0,1)\, \cong\, S^1\times S^2$). In \cite{D0}, the Kauffman bracket skein module of the lens spaces $L(p, q),\, p>0$ is computed via the `braid technique', and a basis of KBSM($L(p,q)$) is obtained that is different from the basis presented in \cite{HP}. In this paper we apply the `braid technique' to exhaust the computation of the Kauffman bracket skein module of the family of lens spaces. More precisely, the integral surgery model of $S^1\times S^2$ allows us to `push' every knot and link in $S^1\times S^2$ to the solid torus, and thus, a basis of KBSM(ST) spans KBSM($S^1\times S^2$). Following the pioneering work of V.F.R. Jones (\cite{Jo, Jo1}), in \cite{D0} we consider the generalized Temperley-Lieb algebra of type B, $TL_{1, n}$, together with a Markov trace constructed on $TL_{1, n}$ and the universal invariant $V$ of the Kauffman bracket type for knots and links in ST defined in \cite{D0}. This invariant gives distinct values to distinct elements of any basis of KBSM(ST), and thus, it recovers KBSM(ST). Hence, in order to compute KBSM($S^1\times S^2$), one needs to extend the invariant $V$ for knots and links in ST to an invariant of knots and links in $S^1\times S^2$, by imposing relations coming from the extra isotopy moves of $S^1\times S^2$, that we call {\it (braid) band moves}, abbreviated $bm$ (or $bbm$). In other words, the following infinite system of equations is obtained:

\begin{equation}\label{eqbbm}
V_{a}\ =\ V_{bm(a)},\quad {\rm for\ every}\ a\ {\rm in\ a\ basis\ of}\ {\rm KBSM(ST)}.
\end{equation}

\noindent By construction, a solution to the infinite system of Equations~(\ref{eqbbm}) corresponds to the computation of the Kauffman bracket skein module of $S^1\times S^2$. These equations are simplified with the use of a new basis of KBSM(ST), $B_{{\rm ST}}$, presented in \cite{D0, D1, GM}, that arises naturally on the level of braids. Using the basis $B_{{\rm ST}}$ of KBSM(ST), we show that the free part of the Kauffman bracket skein module of $S^1\times S^2$ is generated by the unknot (or the empty knot) and we also show how torsion is captured in the solution of the infinite system. It is worth mentioning that $S^1\times S^2$ is the first $3$-manifold, where torsion on its Kauffman bracket skein module is detected via the `braid technique'. Finally, we compute KBSM($S^1\times S^2$) via a different diagrammatic method based on braids following \cite{D4, D2}. In this way we obtain a closed formula for the torsion part of the module and our result agrees with that of Hoste and Przytycki \cite{HP1}. The diagrammatic method via braids has been successfully applied in \cite{D4} for the case of the Kauffman bracket skein module of the complement of $(2, 2p+1)$-torus knots and in \cite{D0} for the case of the lens spaces $L(p,q)$, $p\neq 0$. The importance of the braid approach lies in the fact that it can shed light to the problem of computing (various) skein modules of arbitrary c.c.o. $3$-manifolds (see also \cite{DL3} for the case of the HOMFLYPT skein module of the lens spaces $L(p,1)$). 

\bigbreak

The paper is organized as follows: In \S\ref{basics1} we recall the topological setting and the essential techniques and results from \cite{LR1, DL1}. We first present different models for $S^1 \times S^2$, which demonstrate the difference between the knot theory of this $3$-manifold to the knot theory of $S^3$. We then explain how knots and links in $S^1 \times S^2$ can be viewed as mixed links in $S^3$. Mixed links are close related to Kirby diagrams of links in $S^3$, that is, link diagrams in $S^3$ that involve the surgery description of $S^1 \times S^2$. We present isotopy moves for mixed links in $S^3$ that correspond to isotopy moves for knots and links in $S^1 \times S^2$, and using the analogue of the Alexander theorem for knots and links in ST, we pass on the level of mixed braids. We then translate isotopy in $S^1 \times S^2$ to braid equivalence (the analogue of the Markov theorem for braid equivalence in $S^1 \times S^2$). In \S\ref{basics2} we present results from \cite{La1, La2} and \cite{D0, D1}. More precisely, we start by recalling results on the generalized Hecke algebra of type B, $H_{1, n}$, and we present the universal invariant of the HOMFLYPT type for knots in ST via a unique Markov trace, $tr$, defined on $H_{1, n}$. This invariant captures the HOMFLYPT skein module of ST. We then pass to the {\it generalized Temperley-Lieb algebra of type B}, defined as a quotient algebra of $H_{1, n}$ over an appropriate chosen ideal (see relations (\ref{ideal})), and we obtain the universal invariant $V$ for knots and links in ST of the Kauffman bracket type, that captures KBSM(ST). We also present the basis $B_{\rm ST}$ of KBSM(ST) that describes naturally the isotopy moves for knots and links in $S^1 \times S^2$, and that simplifies the infinite system of equations (\ref{eqbbm}). In \S~\ref{infs}, we solve the infinite system of equations (\ref{eqbbm}) and we prove that the free part of KBSM($S^1 \times S^2$) is generated by the unknot (or the empty knot). We also show how torsion is detected in the solution of this system. Finally, in \S~\ref{kbsmunbr} we demonstrate the diagrammatic approach based on braids for computing KBSM($S^1 \times S^2$).


\section{Topological Set Up}\label{basics1}

\subsection{Models of $S^1 \times S^2$}\label{mod}

$S^1\times S^2$ can be realized as a thickened torus in $4$-dimensions, such that the cross-sections are two copies of the $2$-sphere $S^2$. To better perceive this, we consider the analogue construction of $S^1 \times S^1$ in dimension three: we consider copies of $S^1$ and we `glue' them together to obtain a cylinder. Finally, we glue the first and the last $S^1$ to obtain a torus (see Figure~\ref{s1s1}(a) and (b)). This construction is equivalent to `gluing' the inner and outer $S^1$'s from an infinitum concentric $S^1$'s (see Figure~\ref{s1s1}(c)). Analogously, $S^1 \times S^2$ may be visualized as a three ball $B^3$ with a concentric and of smaller radius $S^2$ being hollowed out, and where the boundary of $B^3$ is glued to the inner $S^2$ as illustrated in Figure~\ref{s1s1}(d). 

\begin{figure}[H]
\begin{center}
\includegraphics[width=6in]{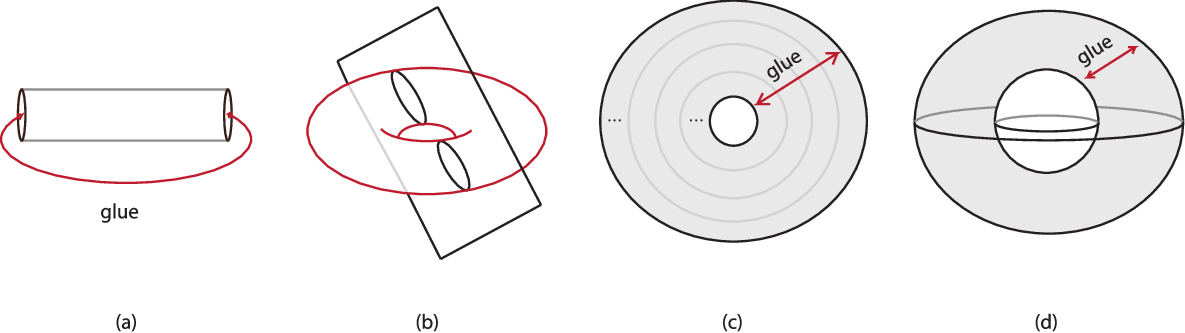}
\end{center}
\caption{Models of $S^1 \times S^1$ (a, b, c) and $S^1 \times S^2$ (d).}
\label{s1s1}
\end{figure}

The model of $S^1 \times S^2$ as illustrated in Figure~\ref{s1s1}(d) can be used to highlight the difference between the knot theory of $S^1 \times S^2$ and the knot theory of $S^3$. Consider for example the knot in Figure~\ref{unks1s2} and notice that using isotopy, we may unknot this knot with the use of the non-separating $2$-sphere. This method is known as the {\it Dirac trick}.

\begin{figure}[H]
\begin{center}
\includegraphics[width=6.2in]{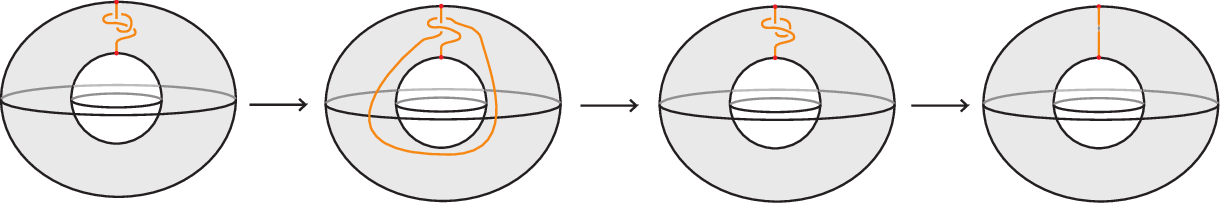}
\end{center}
\caption{The Dirac trick for unknotting a knot in $S^1 \times S^2$.}
\label{unks1s2}
\end{figure}

A different way to visualize $S^1\times S^2$ is by `gluing' two solid tori via some homeomorphism $h$ on their boundaries, such that $h$ takes a meridian curve $m_1$ on the first solid torus ${\rm ST}_1$, to the corresponding meridian curve $m_2$ of the second solid torus ${\rm ST}_2$. We may choose $h$ to be the identity $i: \partial {\rm ST}_1\, \rightarrow\, \partial {\rm ST}_2$. Since now $ST\, =\, S^1\times D^2$, where $D^2$ denotes a disc, and since the gluing of two discs results in a 2-sphere $S^2$, the resulting $3$-manifold can be considered as a family of $2$-spheres parametrized by a circle (for an illustration see Figure~\ref{stst}). 

\begin{figure}[H]
\begin{center}
\includegraphics[width=4.2in]{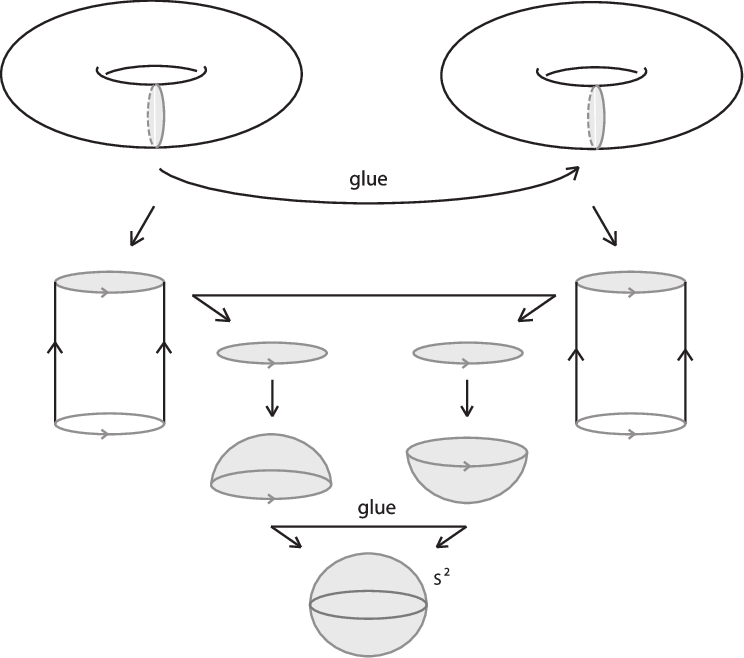}
\end{center}
\caption{Gluing two solid tori to obtain $S^1 \times S^2$.}
\label{stst}
\end{figure}

The above construction of $S^1 \times S^2$ coincides with the Dehn surgery method for obtaining $S^1 \times S^2$. This is a method for obtaining any $3$-manifold by removing finitely many disjoint solid tori from $S^3$ and then sewing them back in a different way. Recall now that by Alexander's trick, the $3$-sphere $S^3$ is the union of two solid tori via a homeomorphism $h$ on their boundaries that maps a meridian curve on the first solid torus to a longitude curve on the second. Hence, $S^1 \times S^2$ may be obtained by first removing a solid torus ${\rm ST}_1$ from $S^3$, and since $S^3\, =\, {\rm ST}_1\, \underset{h}{\bigcup}\, {\rm ST}_2$, what remains is another solid torus ${\rm ST}_2$. We finally glue ${\rm ST}_1$ to ${\rm ST}_2$ via $h: m_1 \mapsto m_2$, where $m_1$ is a meridian on ${\rm ST}_1$ and $m_2$ is a meridian on ${\rm ST}_2$. Let us present the above results in a more rigorous format. Consider the tubular neighborhood of the unknot in $S^3$. This is obviously a solid torus, say ${\rm ST}_1$. Then, this solid torus is to be ``drilled away'' from $S^3$ and then sewed back via $h$. $h$ can be fully determined by a coefficient that instructs how ${\rm ST}_1$ is to be sewed back. In the case of $S^1 \times S^2$, the meridian of ${\rm ST}_1$ is to be glued to a meridian of ${\rm ST}_2$, that is a $(0, 1)$ curve on the boundary of ${\rm ST}_2$. Thus, we say that $S^1 \times S^2$ is obtained from $S^3$ by surgery along the unknot with coefficient zero, i.e. $S^1\times S^2 \cong {\rm O}^0$. It is worth mentioning that $S^1 \times S^2$ may be considered as a special type of lens spaces $L(p,q)$, where $p=0$ and $q=1$.

\subsection{Knots and braids in $S^1\times S^2$}

In this subsection we recall results from \cite{LR1, LR2, DL1}. As mentioned before, we shall consider ST to be the complement of another solid torus in $S^3$ and an oriented link $L$ in ST can be represented by an oriented \textit{mixed link} in $S^{3}$. A mixed link is a link in $S^{3}$ consisting of the unknotted fixed part $\widehat{I}$ representing the complementary solid torus in $S^3$, and the moving part $L$ that links with $\widehat{I}$. A \textit{mixed link diagram} is a diagram $\widehat{I}\cup \widetilde{L}$ of $\widehat{I}\cup L$ on the plane of $\widehat{I}$, where this plane is equipped with the top-to-bottom direction of $I$ (for an illustration see Figure~\ref{mli}).

\begin{figure}[H]
\begin{center}
\includegraphics[width=3.2in]{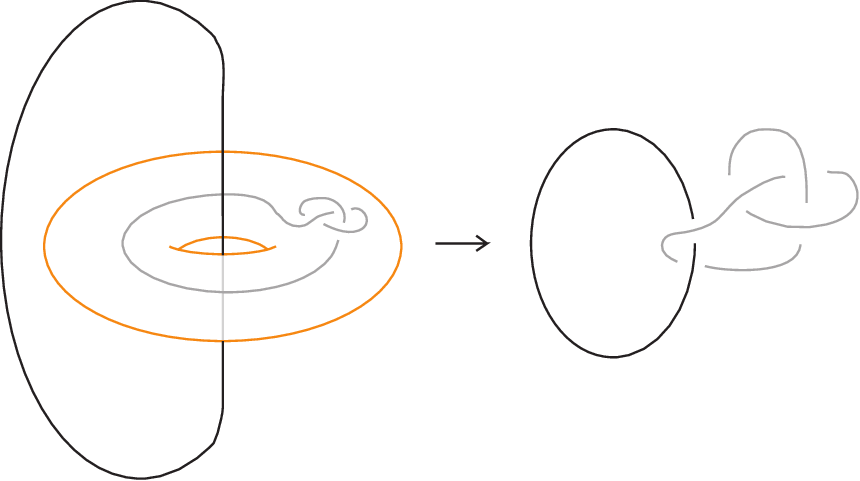}
\end{center}
\caption{ A mixed link in $S^3$ representing a link in ST. }
\label{mli}
\end{figure}

Isotopy of oriented links in ST consists in isotopy in $S^3$ for the moving part of the mixed link, together with the {\it mixed Reidemeister moves}, that involve both the fixed and the moving part of the mixed link (see Figure~\ref{r-thm}).

\begin{figure}[H]
\begin{center}
\includegraphics[width=3.4in]{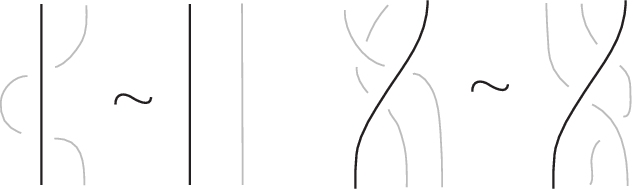}
\end{center}
\caption{ The mixed Reidemeister moves.}
\label{r-thm}
\end{figure}

\smallbreak

As mentioned above, $S^1 \times S^2$ can be obtained from $S^3$ by surgery on the unknot with surgery coefficient zero. In other words, $S^1 \times S^2$ can be obtained by ``killing'' the meridian $m$, that is, by mirroring $D \times S^1$ along its boundary. Knots and links in $S^1 \times S^2$ may be then visualized in $S^3$ as mixed links, together with the surgery coefficient on their fixed component (``Kirby'' like diagrams). It follows that isotopy in $S^1 \times S^2$ can be then viewed as isotopy in ST together with the {\it band moves}, which reflect the surgery description of the manifold and that are similar to the second Kirby move. It is worth mentioning that there are two types of band moves, according to the orientation of the component of the knot and that of the surgery curve (see Figure~\ref{bmov} for the two types of band moves for the case of $S^1 \times S^2$). In the $\beta$-type moves the orientation of the arc is initially opposite to the orientation of the surgery curve, but after the performance of the move their orientations agree. In the $\alpha$-type the orientations initially agree, but disagree after the performance of the move. 

\begin{figure}[H]
\begin{center}
\includegraphics[width=5.5in]{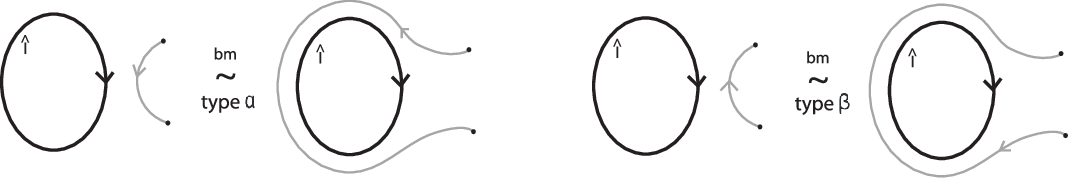}
\end{center}
\caption{The two types of band moves.}
\label{bmov}
\end{figure}

In \cite{DL1} it is shown that in order to describe isotopy for knots and links in a c.c.o. $3$-manifold, it suffices to consider only one type of band moves (cf. \cite[Theorem 6]{DL1}) and thus, isotopy between oriented links in $S^1 \times S^2$ is reflected in $S^3$ by means of the following theorem:

\begin{thm}[{\bf The analogue of Reidemeister's theorem}]
Two oriented links in $S^1 \times S^2$ are isotopic if and only if two corresponding mixed link diagrams of theirs differ by isotopy in {\rm ST} together with a finite sequence of one type of band moves.
\end{thm}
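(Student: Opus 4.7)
The plan is to combine two established ingredients and then record the reduction to a single type of band move. First, I recall that $S^1\times S^2$ is obtained from $S^3$ by $0$-surgery on the unknot, so every link in $S^1\times S^2$ lifts to a mixed link $\widehat{I}\cup L$ in $S^3$ with $\widehat{I}$ the surgery curve. Applying the general mixed-link framework developed in \cite{LR1, LR2}, two such mixed diagrams represent isotopic links in $S^1\times S^2$ if and only if they are related by a finite sequence of mixed Reidemeister moves, realizing ambient isotopy in $S^3\setminus \widehat{I}\cong \mathrm{ST}$, together with band moves along $\widehat{I}$, which realize the sliding of an arc of the moving part $L$ over the attached $2$-handle of the surgery. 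At this stage both the $\alpha$- and $\beta$-type band moves a priori appear.

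Next, I would reduce to a single type by showing that any $\alpha$-move can be expressed as a $\beta$-move conjugated by an isotopy within $\mathrm{ST}$, that is, by mixed Reidemeister moves. The underlying principle is that the manifold $S^1\times S^2$ is insensitive to the orientation of the surgery component $\widehat{I}$, while reversing that orientation interchanges the roles of the $\alpha$- and $\beta$-type moves. Concretely, before carrying out an $\alpha$-type move on an oriented arc, one pushes the arc around $\widehat{I}$ using moves in $\mathrm{ST}$ so that its orientation relative to $\widehat{I}$ is effectively reversed at the site of the move; one then performs the move as a $\beta$-type, and finally undoes the detour. This local rewriting is exactly the content of Theorem~6 of \cite{DL1}, and citing it completes the reduction.

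The main obstacle lies in the second stage, where one must verify diagrammatically that the rewriting is internally consistent: band moves alter the blackboard framing of the sliding arc, so the framing bookkeeping must agree on both sides of the equivalence, and one must check that the local detour around $\widehat{I}$ does not inadvertently introduce additional band moves on other portions of the link. I would follow the explicit local-picture argument of \cite{DL1} to confirm these points. Once the $\alpha$-move is shown to decompose into a $\beta$-move together with moves supported in $\mathrm{ST}$, the ``if'' direction is immediate from the surgery interpretation of band moves, and the ``only if'' direction follows by combining the mixed-link correspondence of \cite{LR1, LR2} with the reduction, yielding the stated analogue of Reidemeister's theorem.
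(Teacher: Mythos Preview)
Your proposal is correct and matches the paper's approach exactly: the paper does not give an independent proof of this theorem but simply records it as a consequence of the mixed-link isotopy results of \cite{LR1, LR2} together with \cite[Theorem~6]{DL1}, which reduces the two types of band moves to one. Your outline cites precisely these ingredients and adds only expository detail about the mechanism of the reduction, so there is nothing to correct or contrast.
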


As shown in \cite{La1}, every mixed link is isotopic to the closure of a {\it mixed braid}. A mixed braid in $S^3$ is a standard braid where, without loss of generality, its first strand represents $\widehat{I}$, the fixed part, and the other strands, $\beta$, represent the moving part. We shall call the subbraid $\beta$ the \textit{moving part} of $I\cup \beta$ (see bottom left illustration of Figure~\ref{bbmov}).

\smallbreak

We now recall the analogue of the Alexander theorem for knots and links in ST (cf. \cite[Theorem 1]{La2}):

\begin{thm}[{\bf The analogue of Alexander's theorem}]
A mixed link diagram $\widehat{I}\cup \widetilde{L}$ of $\widehat{I}\cup L$ may be turned into a \textit{mixed braid} $I\cup \beta$ with isotopic closure.
\end{thm}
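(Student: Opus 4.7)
The plan is to reduce the statement to Lambropoulou's classical braiding algorithm, adapted so as to preserve the fixed part $\widehat{I}$. Since $\widehat{I}$ is unknotted, I would first perform an ambient isotopy of $S^3$ (keeping $\widehat{I}$ setwise fixed) that represents $\widehat{I}$ by a single vertical strand $I$, which already descends with respect to the top-to-bottom direction of the plane of the diagram. The task therefore reduces to braiding the moving part $\widetilde{L}$ around the braid axis at infinity while leaving $I$ intact.

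I would then apply Lambropoulou's algorithm to $\widetilde{L}$. After orienting all components, I partition the arcs of $\widetilde{L}$ into descending arcs and up-arcs; the up-arcs are the only obstruction to the diagram being the closure of a mixed braid, so the strategy is to eliminate them one at a time. To do this, I subdivide each up-arc by inserting finitely many new endpoints so that every sub-arc lies in a small disc $D$ disjoint from $I$ and from the other strands and crossings of $\widetilde{L}$. To each such sub-arc I apply one of two local moves: an over-braiding, in which the sub-arc is pulled over the braid axis at infinity, or an under-braiding, in which it is pulled under. Each such move replaces the up-arc with two descending arcs (together with some new crossings against $I$ and the braid axis), is supported in a neighborhood of $D$, and fixes $\widehat{I}$ setwise, so it realizes an isotopy of $L$ in ST. Because each move strictly reduces the number of up-arcs, the procedure terminates after finitely many steps.

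When no up-arcs remain, every strand of $\widetilde{L}$ descends, and the diagram $I\cup\widetilde{L}$ is the closure of a mixed braid $I\cup\beta$ whose first strand is $I$ and whose remaining strands form $\beta$, the braided image of $\widetilde{L}$. The main obstacle will be to ensure that no over- or under-braiding ever drags an arc of $\widetilde{L}$ across $\widehat{I}$, so that the ambient isotopy of $S^3$ actually descends to an isotopy in ST; this is arranged by keeping the discs $D$ disjoint from $\widehat{I}$ throughout, which is possible by a sufficiently fine subdivision of the up-arcs. All of these details are carried out in Lambropoulou's original proof, and the statement then follows by appeal to \cite[Theorem 1]{La2}.
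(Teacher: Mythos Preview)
Your proposal is consistent with the paper's treatment: the paper does not prove this statement at all but simply recalls it from \cite[Theorem~1]{La2}, and your sketch likewise concludes by appealing to that reference. The outline you give of Lambropoulou's braiding algorithm (straightening $\widehat{I}$, eliminating up-arcs of $\widetilde{L}$ by over/under-braiding while keeping the local discs disjoint from the fixed strand) is an accurate summary of the cited proof, so there is nothing to correct.
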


Hence, a link in $S^1 \times S^2$, that is visualized as a mixed link in $S^3$, may be turned to a mixed braid in $S^3$. We now translate isotopy in $S^1 \times S^2$ on the level of mixed braids. We define a {\it braid band move} (abbreviated to bbm) to be a move between mixed braids, which is a $\beta$-type band move between their closures. It starts with a little band oriented downward, which, before sliding along a surgery strand, gets one twist {\it positive\/} or {\it negative\/} (see middle part of Figure~\ref{bbmov}). Note that in \cite{LR2} it was shown that the choice of the position of connecting the two components after the performance of a bbm is arbitrary.

\begin{figure}[H]
\begin{center}
\includegraphics[width=4.5in]{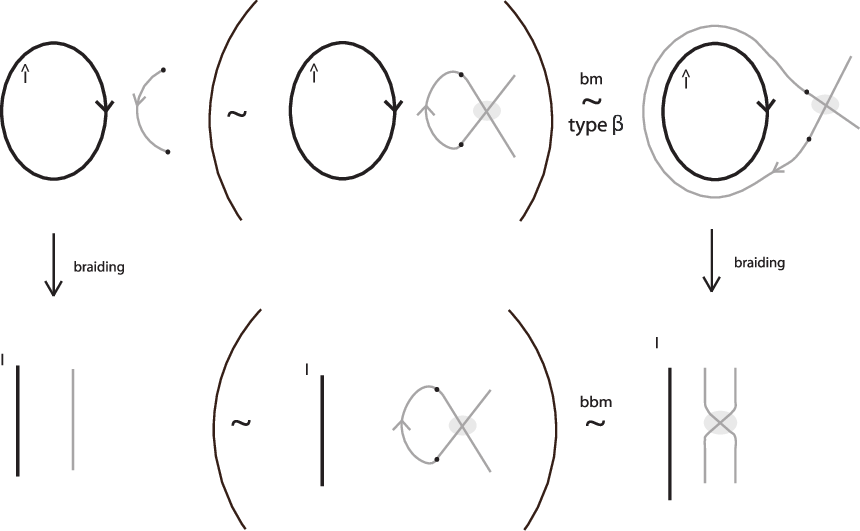}
\end{center}
\caption{ The two types of braid band moves.}
\label{bbmov}
\end{figure}

\smallbreak

The sets of braids related to ST form groups, which are in fact the Artin braid groups of type B, denoted $B_{1,n}$, with presentation:

\[ B_{1,n} = \left< \begin{array}{ll}  \begin{array}{l} t, \sigma_{1}, \ldots ,\sigma_{n-1}  \\ \end{array} & \left| \begin{array}{l}
\sigma_{1}t\sigma_{1}t=t\sigma_{1}t\sigma_{1} \ \   \\
 t\sigma_{i}=\sigma_{i}t, \quad{i>1}  \\
{\sigma_i}\sigma_{i+1}{\sigma_i}=\sigma_{i+1}{\sigma_i}\sigma_{i+1}, \quad{ 1 \leq i \leq n-2}   \\
 {\sigma_i}{\sigma_j}={\sigma_j}{\sigma_i}, \quad{|i-j|>1}  \\
\end{array} \right.  \end{array} \right>, \]

\noindent where the generators $\sigma _{i}$ and $t$ are illustrated in Figure~\ref{genh}(i). Notice that the mixed braid group $B_{1, n}$ can be considered as a subgroup of the classical braid group on $(n+1)$ strands, $B_{n+1}$, involving elements whose first strand is fixed.

\smallbreak

Let now $\mathcal{L}$ denote the set of oriented knots and links in $S^1 \times S^2$. Isotopy in $S^1 \times S^2$ is then translated on the level of mixed braids by means of the following theorem (see also \cite[Theorem~5]{LR2} and \cite[Theorem~9]{DL1}):

\begin{thm}\label{markov}
 Let $L_{1} ,L_{2}$ be two oriented links in $S^1 \times S^2$ and let $I\cup \beta_{1} ,{\rm \; }I\cup \beta_{2}$ be two corresponding mixed braids in $S^{3}$. Then $L_{1}$ is isotopic to $L_{2}$ in $S^1 \times S^2$ if and only if $I\cup \beta_{1}$ is equivalent to $I\cup \beta_{2}$ in $\mathcal{B}$ by the following moves:
\[ \begin{array}{clll}
(i)  & Conjugation:         & \alpha \sim \beta^{-1} \alpha \beta, & {\rm if}\ \alpha ,\beta \in B_{1,n}. \\
(ii) & Stabilization\ moves: &  \alpha \sim \alpha \sigma_{n}^{\pm 1} \in B_{1,n+1}, & {\rm if}\ \alpha \in B_{1,n}. \\
(iii) & Loop\ conjugation: & \alpha \sim t^{\pm 1} \alpha t^{\mp 1}, & {\rm if}\ \alpha \in B_{1,n}. \\
(iv) & Braid\ band\ moves: & \alpha \sim  \alpha_+ \sigma_1^{\pm 1}, & a_+\in B_{1, n+1},
\end{array}
\]

\noindent where $\alpha_+$ is the word $\alpha$ with all indices shifted by +1. Note that moves (i), (ii) and (iii) correspond to link isotopy in {\rm ST}.
\end{thm}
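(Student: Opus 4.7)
The plan is to reduce this statement to two previously established results and then carry out a careful diagrammatic bookkeeping. The first input is the Reidemeister--type analogue already stated above, which asserts that link isotopy in $S^1\times S^2$ is generated by link isotopy in ST together with one type of band moves (say the $\beta$-type). The second input is the Markov theorem for mixed braids in ST of Lambropoulou--Rourke (\cite{LR2}), which asserts that braid equivalence corresponding to link isotopy in ST is generated precisely by moves $(i)$, $(ii)$, $(iii)$ of the statement. Given these two facts, what remains to prove is that each $\beta$-type band move on the closure of a mixed braid $I\cup\beta$ can be realized at the braid level by a finite sequence of moves $(i)$--$(iii)$ followed by a single move of type $(iv)$, and conversely that each braid band move descends to a band move on the closure.

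For the forward direction I would first use the ``arbitrariness of position'' observation from \cite{LR2} (cited in the excerpt) to normalise the attaching arc of the band: after conjugation in $B_{1,n}$ and isotopy in ST, I may assume the band is attached just below the closure, running downward parallel to the fixed strand $\widehat I$ and acquiring a single $\pm$ half-twist before being glued back in. Once in this normal form, I apply a relative version of the Alexander theorem for mixed braids (\cite{La1, La2}) to re-braid the resulting mixed link. The newly-introduced parallel arc becomes a new strand sitting immediately to the right of $\widehat I$; after relabelling so that this strand is the second strand of the new mixed braid, every generator $\sigma_i$ and the loop generator $t$ in the original word $\alpha$ gets its index shifted by $+1$, producing exactly $\alpha_+\in B_{1,n+1}$, and the $\pm$ half-twist between the new strand and the fixed first strand contributes the factor $\sigma_1^{\pm 1}$.

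The main obstacle is making this normalisation and re-braiding canonical, since a priori different choices of attaching position, orientation of sliding, and braiding algorithm could produce genuinely different words in $B_{1,n+1}$. To handle this I would argue, following the pattern of \cite{LR2, DL1}, that any two such choices differ by an isotopy of the closure that does not cross $\widehat I$, so the resulting mixed braids are related by moves $(i)$--$(iii)$; the key lemma needed is that two mixed braidings of the same mixed link differ by $(i)$--$(iii)$, which is the Markov theorem for ST applied to the band-moved diagram. The case analysis should also check that both signs of twist can be absorbed into the $\sigma_1^{\pm 1}$ factor without introducing extra band moves, and that the $\alpha$-type band move, which the Reidemeister theorem allows us to discard, would at braid level amount to the same braid band move up to moves $(i)$--$(iii)$ (this uses \cite[Thm.\ 6]{DL1}).

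The converse direction is straightforward bookkeeping: moves $(i)$ and $(ii)$ are the classical braid equivalence moves restricted to $B_{1,n}$; move $(iii)$ is an isotopy that slides the moving part once around the fixed strand, which is a genuine isotopy of the closure in ST; and move $(iv)$ is, by construction, the braid realisation of a $\beta$-type band move on the closure, hence an isotopy in $S^1\times S^2$ by the Reidemeister analogue. Combining the two directions yields the equivalence asserted in the theorem.
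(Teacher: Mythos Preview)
Your sketch is reasonable and follows the standard route taken in the cited references, but note that the paper itself does \emph{not} prove this theorem: it is stated as a known result with a pointer to \cite[Theorem~5]{LR2} and \cite[Theorem~9]{DL1}, where the full argument is carried out. So there is no ``paper's own proof'' to compare against beyond those citations.

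That said, your outline matches the architecture of those references: reduce to the Reidemeister analogue (isotopy in ST plus band moves), invoke the Markov theorem for ST to account for moves $(i)$--$(iii)$, and then show that a $\beta$-type band move on a closed mixed braid can be put into the normal form $\alpha_+\sigma_1^{\pm 1}$ after ST-equivalence. The one place where your sketch is a bit loose is the normalisation step: in \cite{LR2, DL1} this is done via the notion of $L$-moves and a careful ``parting/combing'' procedure rather than an ad hoc relative Alexander theorem, and the independence of the attaching position is itself a nontrivial lemma there. If you were writing this up in full you would need to either import those lemmas or reproduce them; as a high-level plan, though, your proposal is correct.
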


\begin{nt}\rm
We denote a braid band move by bbm and, specifically, the result of a positive (cor. negative) braid band move performed on a mixed braid $\beta$ by $bbm_{+}(\beta)$ (cor. $bbm_{-}(\beta)$).
\end{nt}

\section{Algebraic Tools}\label{basics2}

In this section we recall results from \cite{D0} that are crucial for the computation of KBSM($S^1\times S^2$). More precisely, we first recall results on the generalized Temperley-Lieb algebra of type B, $TL_{1, n}$, that is related to the knot theory of ST, and through $TL_{1, n}$ we obtain the universal invariant for knots and links in ST of the Kauffman bracket type. We also present different bases of KBSM(ST) via braids.

\subsection{Knot algebras related to ST \& $S^1\times S^2$}

It is well understood that the generalized Hecke algebra of type B is related to the knot theory of the solid torus \cite{La2}. This algebra is defined as a quotient of the mixed braid group algebra ${\mathbb Z}\left[q^{\pm 1} \right]B_{1,n}$ over the ideal generated by the quadratic relations ${\sigma_{i}^2=(q-1)\sigma_{i}+q}$. Namely:

\begin{equation*}
H_{1,n}(q)= \frac{{\mathbb Z}\left[q^{\pm 1} \right]B_{1,n}}{ \langle \sigma_i^2 -\left(q-1\right)\sigma_i-q \rangle}.
\end{equation*}

\noindent As shown in \cite{La2}, $H_{1,n}$ admits a unique Markov trace, $tr$. This Markov trace gives rise to the most generic analogue of the HOMFLYPT polynomial, $X$, for links in the solid torus ST. The construction of $tr$ was possible with the use of appropriate inductive bases on  $\mathcal{H}:=\bigcup_{n=1}^{\infty}\, H_{1, n}$. Namely:

\begin{thm}[Proposition~1 \& Theorem~1 \cite{La2}]
The following sets form linear bases for ${\rm H}_{1,n}(q)$:
\begin{equation}\label{bHeck}
\begin{array}{llll}
 (i) & \Sigma_{n} & = & \{t_{i_{1} } ^{k_{1} } \ldots t_{i_{r}}^{k_{r} } \cdot \sigma \} ,\ {\rm where}\ 0\le i_{1} <\ldots <i_{r} \le n-1,\\
 (ii) & \Sigma^{\prime} _{n} & = & \{ {t^{\prime}_{i_1}}^{k_{1}} \ldots {t^{\prime}_{i_r}}^{k_{r}} \cdot \sigma \} ,\ {\rm where}\ 0\le i_{1} < \ldots <i_{r} \le n-1, \\
\end{array}
\end{equation}
\noindent where $k_{1}, \ldots ,k_{r} \in {\mathbb Z}$, $t_0^{\prime}\ =\ t_0\ :=\ t, \quad t_i^{\prime}\ =\ g_i\ldots g_1tg_1^{-1}\ldots g_i^{-1} \quad {\rm and}\quad t_i\ =\ g_i\ldots g_1tg_1\ldots g_i$ are the `looping elements' in ${\rm H}_{1, n}(q)$ (see Figure~\ref{genh}(ii)) and $\sigma$ a basic element in the Iwahori-Hecke algebra of type A, ${\rm H}_{n}(q)$.
\end{thm}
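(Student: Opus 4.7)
The plan is to prove both parts together by induction on $n$, using the tower $H_{1,1}(q) \hookrightarrow H_{1,2}(q) \hookrightarrow \cdots$ and reducing words to a normal form in which all occurrences of the type-B generator $t$ have been ``absorbed'' into looping elements at the leftmost position. The base case $n = 1$ is immediate: $H_{1,1}(q) = \mathbb{Z}[q^{\pm 1}][t^{\pm 1}]$ is freely generated by $\{t^k : k \in \mathbb{Z}\}$, and both $\Sigma_1$ and $\Sigma'_1$ reduce to this set since the only possible $\sigma$-factor is $1 \in H_1(q)$.

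For the inductive step I would proceed in two parts: spanning and independence. For spanning, the first preliminary step is to verify that the looping elements pairwise commute, that is, $t_i t_j = t_j t_i$ for all $0 \le i, j \le n-1$ (and similarly for the $t'_i$). This is where the type-B relation $\sigma_1 t \sigma_1 t = t \sigma_1 t \sigma_1$ plays the essential role, since after conjugating by $\sigma_j \cdots \sigma_1$ it translates into the commutation of $t_i$ and $t_{i+1}$; the case $|i-j|>1$ follows from $t\sigma_i = \sigma_i t$ for $i > 1$ together with the braid relations. Given pairwise commutativity, any monomial in the $t_i^{\pm 1}$'s can be rearranged into an ordered product $t_{i_1}^{k_1}\cdots t_{i_r}^{k_r}$ with $i_1 < \cdots < i_r$. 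Next, I would show that an arbitrary word in the generators of $H_{1,n}(q)$ reduces to a linear combination of elements of $\Sigma_n$. The rewriting rule is: every time the letter $t$ appears internally in a word, one uses the definition $t_i = \sigma_i\cdots\sigma_1 t \sigma_1 \cdots \sigma_i$ together with the quadratic relation $\sigma_i^2 = (q-1)\sigma_i + q$ to pull that occurrence of $t$ out to the left as a looping element, at the cost of introducing lower-order terms in the $\sigma_i$'s. An induction on the number of $t$-letters, with the quadratic relation handling the ``$\sigma_i^2$'' debris and the standard Matsumoto-style reduction in $H_n(q)$ handling the remaining $\sigma$-tail, produces a representation in $\Sigma_n$. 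The argument for $\Sigma'_n$ is identical with $t_i$ replaced by $t'_i$.

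For linear independence, which I expect to be the main obstacle, the cleanest strategy is to exhibit a faithful representation whose value on a basis element of $\Sigma_n$ can be read off. Concretely, I would construct the ``generic'' representation $\rho : H_{1,n}(q) \to \operatorname{End}(M)$ with $M$ the free module on the symbols $[t_{i_1}^{k_1}\cdots t_{i_r}^{k_r}\sigma]$, and show by direct verification of the defining relations that this is well-defined. Alternatively, and more conceptually, one identifies $H_{1,n}(q)$ with the affine Hecke algebra of type $A_{n-1}$ in its Bernstein presentation, where the $t_i$'s correspond to the multiplicative generators of the lattice part and the $\sigma$-factor to a basis of the finite Hecke algebra $H_n(q)$. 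Under this identification, linear independence of $\Sigma_n$ (resp.\ $\Sigma'_n$) corresponds to the Bernstein--Lusztig basis theorem for the affine Hecke algebra. This identification also makes transparent the equivalence of the two bases: $\Sigma_n$ and $\Sigma'_n$ differ by the standard involution exchanging $t_i \leftrightarrow t'_i$, which on the affine side corresponds to swapping the two natural families of generators of the lattice.

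The main technical obstacle is the commutation of the looping elements. Once that is in hand, spanning is a mechanical straight-line reduction and independence follows from the affine Hecke algebra identification (or, equivalently, from producing a faithful module whose basis is indexed by $\Sigma_n$). The commutativity step itself is delicate precisely because the defining type-B relation $\sigma_1 t \sigma_1 t = t \sigma_1 t \sigma_1$ is a length-$4$ mixed identity rather than a clean commutation, so one has to iterate it carefully together with the braid relations to deduce the clean statement $t_i t_j = t_j t_i$.
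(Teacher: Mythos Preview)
The paper does not contain a proof of this statement: it is quoted verbatim as Proposition~1 and Theorem~1 of \cite{La2}, with no argument given. So there is nothing in the present paper to compare your proposal against.

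That said, your outline is essentially the standard route and aligns with how the result is proved in the literature. The identification of $H_{1,n}(q)$ with the affine Hecke algebra of type $A_{n-1}$ in its Bernstein presentation is exactly the conceptual framework in which $\Sigma_n$ and $\Sigma'_n$ arise, and the commutativity of the $t_i$'s (resp.\ $t'_i$'s) is precisely the statement that the lattice part is abelian. One minor caution: in your spanning argument, the ``pulling $t$ to the left'' step is not quite a single-pass rewrite, because each application of the quadratic relation can create new $\sigma_i$'s to the left of remaining $t$'s; the clean way to organise this is a double induction on the number of $t$-letters and on a suitable length or degree, which is what \cite{La2} does. Also, the involution you mention exchanging $t_i \leftrightarrow t'_i$ is not quite an algebra automorphism over $\mathbb{Z}[q^{\pm 1}]$ unless you also send $q \mapsto q^{-1}$ (or equivalently $\sigma_i \mapsto \sigma_i^{-1}$), so the passage between the two bases is slightly more than a symmetry; in practice one just repeats the spanning and independence arguments with $t'_i$ in place of $t_i$.
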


Note that a basic element in ${\rm H}_{n}(q)$ is of the form of elements in the following set \cite{Jo}:

$$ S_n =\left\{(g_{i_{1} }g_{i_{1}-1}\ldots g_{i_{1}-k_{1}})(g_{i_{2} }g_{i_{2}-1 }\ldots g_{i_{2}-k_{2}})\ldots (g_{i_{p} }g_{i_{p}-1 }\ldots g_{i_{p}-k_{p}})\right\}, $$

\noindent for $1\le i_{1}<\ldots <i_{p} \le n-1{\rm \; }$.

\begin{figure}[H]
\begin{center}
\includegraphics[width=5.5in]{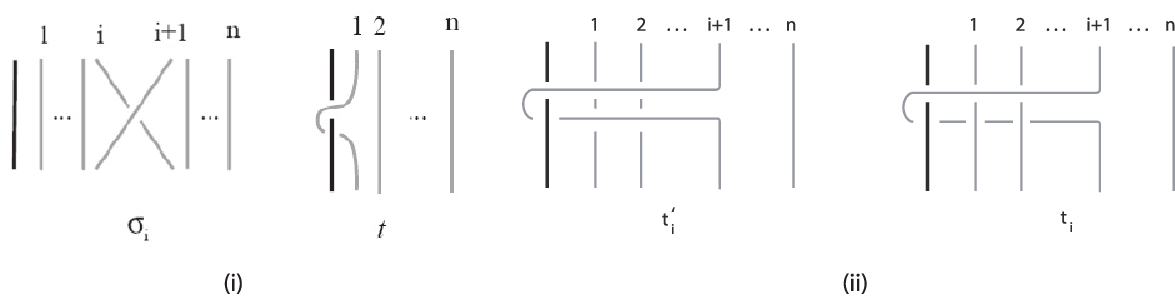}
\end{center}
\caption{The generators of $B_{1, n}$ and the `looping' elements $t^{\prime}_{i}$ and $t_{i}$.}
\label{genh}
\end{figure}

\begin{thm}{\cite[Theorem~6 \& Definition~1]{La2}} \label{tr}
Given $z, s_{k}$ with $k\in {\mathbb Z}$ specified elements in $R={\mathbb Z}\left[q^{\pm 1} \right]$, there exists a unique linear Markov trace function on $\mathcal{H}$:

\begin{equation*}
{\rm tr}:\mathcal{H}  \to R\left(z,s_{k} \right),\ k\in {\mathbb Z}
\end{equation*}

\noindent determined by the rules:

\[
\begin{array}{lllll}
(1) & {\rm tr}(ab) & = & {\rm tr}(ba) & \quad {\rm for}\ a,b \in {\rm H}_{1,n}(q) \\
(2) & {\rm tr}(1) & = & 1 & \quad {\rm for\ all}\ {\rm H}_{1,n}(q) \\
(3) & {\rm tr}(ag_{n}) & = & z{\rm tr}(a) & \quad {\rm for}\ a \in {\rm H}_{1,n}(q) \\
(4) & {\rm tr}(a{t^{\prime}_{n}}^{k}) & = & s_{k}{\rm tr}(a) & \quad {\rm for}\ a \in {\rm H}_{1,n}(q),\ k \in {\mathbb Z} \\
\end{array}
\]

\bigbreak

\noindent Then, the function $X:\mathcal{L}$ $\rightarrow R(z,s_{k})$

\begin{equation*}
X_{\widehat{\alpha}} = \Delta^{n-1}\cdot \left(\sqrt{\lambda } \right)^{e}
{\rm tr}\left(\pi \left(\alpha \right)\right),
\end{equation*}

\noindent is an invariant of oriented links in {\rm ST}, where $\Delta:=-\frac{1-\lambda q}{\sqrt{\lambda } \left(1-q\right)}$, $\lambda := \frac{z+1-q}{qz}$, $\alpha \in B_{1,n}$ is a word in the $\sigma _{i}$'s and $t^{\prime}_{i} $'s, $\widehat{\alpha}$ is the closure of $\alpha$, $e$ is the exponent sum of the $\sigma _{i}$'s in $\alpha $, $\pi$ the canonical map of $B_{1,n}$ to ${\rm H}_{1,n}(q)$, such that $t\mapsto t$ and $\sigma _{i} \mapsto g_{i}$.
\end{thm}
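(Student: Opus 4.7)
The plan is to establish the theorem in two stages: first construct the trace $\mathrm{tr}$ on $\mathcal{H}$, and then show the normalization $\Delta^{n-1}(\sqrt{\lambda})^{e}\,\mathrm{tr}$ is invariant under the Markov-type moves for links in ST.

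For the trace itself, I would begin with \emph{uniqueness}, which is the easier half. Rules (1)--(4) together with the presentation of $H_{1,n}(q)$ force the value of $\mathrm{tr}$ on the inductive basis $\Sigma_n'$ of \eqref{bHeck}. Indeed, given a basis monomial $w={t'}_{i_1}^{k_1}\cdots {t'}_{i_r}^{k_r}\cdot\sigma$, if the top index $n-1$ appears in some ${t'}_{i_j}^{k_j}$ one uses the trace property (1) to bring it to the right-hand end and then applies rule (4) to strip off a factor $s_{k_j}$, reducing to $H_{1,n-1}$. If instead $n-1$ appears only inside the Hecke part $\sigma \in S_n$, one uses rule (3) to strip off a $g_{n-1}$ factor, contributing $z$ and dropping to $H_{1,n-1}(q)$; the base case $H_{1,1}(q)={\mathbb Z}[q^{\pm 1}][t^{\pm 1}]$ is fixed by $\mathrm{tr}(t^k)=s_k$ and $\mathrm{tr}(1)=1$.

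For \emph{existence} I would define $\mathrm{tr}$ inductively on the basis $\Sigma_n'$ by the formula just derived, and then verify the trace property $\mathrm{tr}(ab)=\mathrm{tr}(ba)$. By linearity and the standard reduction, it is enough to check that $\mathrm{tr}(wg)=\mathrm{tr}(gw)$ for $w$ a basis element and $g$ running through the generators $g_1,\dots,g_{n-1},t$; one uses the braid relations, the quadratic relation $g_i^2=(q-1)g_i+q$ (to rewrite monomials that leave $\Sigma_n'$ back into the basis), and the commutation $t g_i = g_i t$ for $i\geq 2$. The nontrivial interactions occur when $g$ meets a looping factor ${t'}_j^{k}$; here one exploits the recursion ${t'}_{i}=g_i {t'}_{i-1}g_i$ to bookkeep how a generator passes through a looping element. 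This is the step I expect to be the main obstacle: the verification is a substantial induction on $n$ and on the number of looping factors, and the commutation of $g_i$ past ${t'}_j^{k_j}$ produces a cascade of Hecke-algebra identities that must cancel cleanly to preserve well-definedness.

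Once $\mathrm{tr}$ is in hand, I would check that $X_{\widehat\alpha}=\Delta^{n-1}(\sqrt{\lambda})^{e}\,\mathrm{tr}(\pi(\alpha))$ is invariant under the three ST-moves from Theorem~\ref{markov}(i)--(iii). \emph{Conjugation} invariance is immediate: $e$ and $n$ are unchanged, and $\mathrm{tr}(\pi(\beta)^{-1}\pi(\alpha)\pi(\beta))=\mathrm{tr}(\pi(\alpha))$ by property~(1). \emph{Loop conjugation} $\alpha\sim t^{\pm 1}\alpha t^{\mp 1}$ is again handled by~(1). For \emph{positive stabilization} $\alpha\mapsto\alpha\sigma_n$, the quantity $n$ increases by one, $e$ increases by one, and $\mathrm{tr}(\pi(\alpha)g_n)=z\,\mathrm{tr}(\pi(\alpha))$ by rule~(3); thus invariance forces the identity $\Delta\sqrt{\lambda}\,z=1$, which together with the requirement for \emph{negative stabilization} (obtained by substituting $g_n^{-1}=q^{-1}g_n-(1-q^{-1})\cdot 1$, giving $\mathrm{tr}(\pi(\alpha)g_n^{-1})=(q^{-1}z-(1-q^{-1}))\,\mathrm{tr}(\pi(\alpha))$) yields the two conditions
\[
\Delta\sqrt{\lambda}\,z=1,\qquad \frac{\Delta}{\sqrt{\lambda}}\bigl(q^{-1}z-(1-q^{-1})\bigr)=1.
\]
Solving simultaneously recovers $\lambda=\frac{z+1-q}{qz}$ and $\Delta=-\frac{1-\lambda q}{\sqrt{\lambda}(1-q)}$, matching the stated normalization. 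Since $\pi$ descends the quadratic and braid relations and $B_{1,n}$-conjugation lifts to $H_{1,n}(q)$-conjugation, all three checks are consistent, and the theorem follows.
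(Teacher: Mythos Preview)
The paper does not contain a proof of this theorem: it is stated as a cited result from \cite{La2} (Lambropoulou), recalled in \S\ref{basics2} as background for the construction of the invariant $V$. There is therefore nothing in the present paper to compare your argument against.

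That said, your sketch is the standard route and is essentially how the result is established in \cite{La2} (itself modeled on Jones's construction of the Ocneanu trace in type~A). The uniqueness argument via the inductive basis $\Sigma_n'$ is correct as you describe it; the existence verification $\mathrm{tr}(wg)=\mathrm{tr}(gw)$ for generators $g$ is indeed the substantive part and is carried out in \cite{La2} by exactly the kind of case analysis you outline. Your derivation of $\lambda$ and $\Delta$ from the two stabilization conditions is also the standard computation. One small correction: in your uniqueness paragraph you write ``strip off a $g_{n-1}$ factor'' when rule~(3) actually refers to $g_n$; the inductive step passes from $H_{1,n+1}$ to $H_{1,n}$ by removing the top generator $g_n$ (or ${t'_n}^k$), not $g_{n-1}$.
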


\begin{remark}\rm
As shown in \cite{La2, DL2} the invariant $X$ recovers the HOMFLYPT skein module of ST (see also \cite{DL3, DGLM} for a survey on the HOMFLYPT skein module of the lens spaces $L(p, 1)$ via braids).
\end{remark}

The algebraic counterpart construction of the Kauffman bracket is the Temperley-Lieb algebra together with a unique Markov trace constructed by V.F.R. Jones (see \cite{Jo} and references therein). It is therefore natural to consider the {\it generalized Temperley-Lieb algebra of type B}, defined as a quotient of $H_{1,n}(q)$, in order to obtain a universal invariant of the Kauffman bracket type for knots and links in ST. We have the following definition (\cite[Definition~2]{D0}):

\begin{defn}\rm
The {\it generalized Temperley-Lieb algebra of type B}, $TL_{1, n}$, is defined as the quotient of the generalized Hecke algebra of type B, $H_{1,n}(q)$, over the ideal generated by expressions of the form: 
\begin{equation}\label{ideal}
\begin{array}{lcl}
\sigma_{i, i+1} & := & 1 + u\ (\sigma_i+ \sigma_{i+1}) + u^2\ (\sigma_i\sigma_{i+1}+\sigma_{i+1}\sigma_i)+u^3\ \sigma_i\sigma_{i+1}\sigma_i.\\
\end{array}
\end{equation} 
\end{defn}

\begin{remark}\rm
For the rest of the paper we will adapt a different presentation for $H_{1, n}$, that involves the parameter $u$ and the quadratic relations: 
\begin{equation}\label{quad}
{\sigma_{i}^2=(u-u^{-1})\, \sigma_{i}+1}.
\end{equation}
Note also that we will be using the $\sigma_i$'s for the trace instead of the $g_i$'s. Finally, note that we will be using the symbol $\widehat{\cong}$ when both stabilization moves and conjugation are performed (instead of using tr).
\end{remark}

In \cite{D0}, it is shown that for $z=-\frac{1}{u(1+u^2)}$ the Markov trace constructed in $H_{1, n}$ factors through to ${TL}_{1, n}$. This leads to the universal invariant of the Kauffman bracket type for knots and links in ST. More precisely, we have the following theorem:

\begin{thm}\label{inva}
The invariant

\begin{equation*}
V_{\widehat{\alpha}}(u)\ :=\ \left(-\frac{1+u^2}{u} \right)^{n-1} \left(u\right)^{2e} {\rm tr}\left(\overline{\pi} \left(\alpha \right)\right),
\end{equation*}

\noindent where $\alpha \in B_{1,n}$ is a word in the $\sigma _{i}$'s and $t^{\prime}_{i} $'s, $\widehat{\alpha}$ is the closure of $\alpha$, $e$ is the exponent sum of the $\sigma _{i}$'s in $\alpha $, $\overline{\pi}$ the canonical map of $B_{1,n}$ to ${\rm TL}_{1, n}$, such that $t\mapsto t$ and $\sigma _{i} \mapsto g_{i}$, is the universal invariant for knots and links in ST of the Kauffman bracket type.
\end{thm}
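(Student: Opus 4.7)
The plan is to verify three properties in turn: that $V$ is well-defined under braid equivalence in $\mathrm{ST}$, that it realises the Kauffman bracket skein relation, and that it is universal in the stated sense. The starting point, recalled in \S\ref{basics2}, is the fact that for $z=-1/(u(1+u^2))$ the Markov trace of Theorem~\ref{tr} descends from $H_{1,n}$ to the quotient $TL_{1,n}$; this is what makes $\mathrm{tr}\bigl(\overline{\pi}(\alpha)\bigr)$ meaningful. Invariance of $V_{\widehat{\alpha}}(u)$ then reduces to checking moves (i)--(iii) of Theorem~\ref{markov}. Conjugation in $B_{1,n}$ preserves both $e$ and $n$, and leaves $\mathrm{tr}$ fixed by property (1). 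For positive stabilisation $\alpha\mapsto \alpha\sigma_n$, property (3) gives $\mathrm{tr}\bigl(\overline{\pi}(\alpha)g_n\bigr)=z\,\mathrm{tr}\bigl(\overline{\pi}(\alpha)\bigr)$, and the normalisation factor picks up exactly $\bigl(-\tfrac{1+u^2}{u}\bigr)u^{2}z=1$, which is the content of the choice of $z$. Negative stabilisation follows after using (\ref{quad}) to replace $\sigma_n^{-1}$ by $\sigma_n-(u-u^{-1})$. Loop conjugation $\alpha\sim t^{\pm 1}\alpha t^{\mp 1}$ is a consequence of the cyclic property (1) applied after writing elements in the inductive basis (\ref{bHeck}), since the looping generator $t$ commutes appropriately through the $H_{1,n}$-part.

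Next I would establish the Kauffman bracket skein relation by analysing the ideal (\ref{ideal}). The relation $\sigma_{i,i+1}=0$, combined with the quadratic relation (\ref{quad}), forces each $\overline{\pi}(\sigma_i)$ to decompose in $TL_{1,n}$ as a scalar multiple of the identity plus a scalar multiple of the Temperley--Lieb cup-cap generator $e_i$; taking the closure of this decomposition on a diagram showing a single crossing realises precisely $L_+ - A L_0 - A^{-1}L_\infty = 0$ once $A$ is identified with the appropriate (half-)power of $u$ dictated by the factor $u^{2e}$. A parallel short calculation, using properties (2) and (3) with the prescribed $z$, shows that disjoint union with a trivially framed unknot multiplies $V$ by $-\tfrac{1+u^2}{u}=-(A^{2}+A^{-2})$, which is exactly the second defining relation of $\mathrm{KBSM}$.

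Finally, universality follows from the remark at the end of \S\ref{basics2}: through the basis $B_{\mathrm{ST}}$ of $\mathrm{KBSM}(\mathrm{ST})$ arising from (\ref{bHeck}) under closure, $V$ assigns pairwise distinct values to distinct basis elements. Any other Kauffman-bracket-type invariant of links in $\mathrm{ST}$ must satisfy the same skein and loop relations and is therefore determined by its values on $B_{\mathrm{ST}}$, so it factors through $V$. I expect the main obstacle to be the skein-relation step: one must identify the image of the ideal generator with a concrete scalar multiple of the Temperley--Lieb cup-cap element in $TL_{1,n}$, and then carefully track powers of $u$ and sign conventions through the normalisation $\bigl(-\tfrac{1+u^2}{u}\bigr)^{n-1}u^{2e}$ so that the skein and loop relations appear in exactly the Kauffman form of Figure~\ref{skein}. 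The $t$-generator does not complicate this step, since (\ref{ideal}) involves only the Hecke-type generators $\sigma_i$, so the type-B looping elements pass through the argument formally.
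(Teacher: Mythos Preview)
The paper does not supply an in-text proof of this theorem: it is quoted as a result from \cite{D0}, preceded only by the remark that for $z=-\tfrac{1}{u(1+u^2)}$ the Markov trace on $H_{1,n}$ factors through $TL_{1,n}$. So there is no proof here against which to compare line by line; your proposal is in fact more detailed than anything the present paper offers for this statement.

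That said, your outline follows exactly the standard Jones-type template that \cite{D0} (and before it \cite{La2}, \cite{Jo}) uses, and the invariance checks are correct. Two small remarks. First, loop conjugation is simply conjugation by $t\in B_{1,n}$, so it falls directly under trace property~(1); the detour through the inductive basis~(\ref{bHeck}) is unnecessary. Second, your skein-relation paragraph is the genuinely incomplete part: you correctly identify that the ideal~(\ref{ideal}) together with~(\ref{quad}) forces $g_i$ to split as a linear combination of $1$ and a cup--cap idempotent, but you stop short of actually producing that combination and matching the coefficients to $A$, $A^{-1}$ after the normalisation $u^{2e}$. This is precisely where the substitution $u\leftrightarrow A^{2}$ (or its square root, depending on conventions) must be pinned down, and it is the one place where a sign or exponent error would break the identification with the Kauffman bracket. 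The universality argument you give is the same one the paper records in Remark~\ref{rm11}(ii), namely that $\mathrm{tr}(tt_1'\cdots t_n')=s_1^{\,n+1}$ separates the basis $\mathcal{B}'_{\mathrm{ST}}$, so $V$ recovers $\mathrm{KBSM}(\mathrm{ST})$.
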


The invariant $V$ recovers the Kauffman bracket skein module of ST and as shown in \cite{D0}, it can be extended to an invariant for knots and links in $L(p,q)$ by considering the effect of the braid band moves on elements in some basis of KBSM(ST) (see also \cite{P}). Equivalently, and since $S^1 \times S^2\cong L(0,1)$, in order to compute KBSM($S^1 \times S^2$), it suffices to consider elements in some basis of KBSM(ST), impose on $V$ relations coming from the band moves and solve the resulting infinite system of equations.

\subsection{The Kauffman bracket skein module of ST via braids}

From the discussion in \S~\ref{basics1}, it is clear that the knot theory of ST is of fundamental importance for studying knot theory in other c.c.o. $3$-manifolds. In \cite{HK}, the Kauffman bracket skein module of the solid torus is computed using diagrammatic methods by means of the following theorem:

\begin{thm}[\cite{HK}]
The Kauffman bracket skein module of ST, KBSM(ST), is freely generated by an infinite set of generators $\left\{x^n\right\}_{n=0}^{\infty}$, where $x^n$ denotes a parallel copy of $n$ longitudes of ST and $x^0$ is the affine unknot (see Figure~\ref{tur}).
\end{thm}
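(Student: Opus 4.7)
The plan is to prove the theorem in two stages: first show that the set $\{x^n\}_{n\ge 0}$ spans KBSM(ST) via a diagrammatic state-sum reduction, and second show that it is $R$-linearly independent by evaluating the universal Kauffman-bracket-type invariant $V$ of Theorem~\ref{inva}.

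For the spanning step, I would realize ST as the thickened annulus $A\times I$ with $A=S^1\times [0,1]$, and represent any framed link $L\subset {\rm ST}$ as a blackboard-framed diagram on $A$. At every crossing I would apply $L_+=AL_0+A^{-1}L_\infty$ to obtain an $R$-linear combination of diagrams with strictly fewer crossings, and iterate until no crossings remain. A crossingless state on $A$ is a disjoint union of simple closed curves, each either null-homotopic in $A$ or isotopic to the core longitude of $A$. Using $L\sqcup{\rm O}=(-A^2-A^{-2})L$ every null-homotopic component can be absorbed into a scalar, and the remaining parallel longitudes — after handling blackboard curls via the framed Reidemeister~I factor $-A^{\pm 3}$ — assemble into $x^n$ for some $n\ge 0$. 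Hence every element of KBSM(ST) lies in the $R$-span of $\{x^n\}_{n\ge 0}$.

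For linear independence, I would apply the invariant $V$ to a hypothetical relation $\sum_{n=0}^N c_n(A)\, x^n = 0$. The generator $x^n$ (for $n\ge 1$) is the closure of the braid $\beta_n := t \cdot t_1' \cdots t_{n-1}' \in B_{1,n}$, which has exponent sum zero in the $\sigma_i$'s. Applying the Markov trace ${\rm tr}$ of Theorem~\ref{tr} inductively, rule~(4) extracts a factor $s_1$ at each step since each $t_i'$ appears to the first power, yielding ${\rm tr}(\beta_n)=s_1^n$ (with the normalization ${\rm tr}(1)=1$). Hence $V(x^n) = \left(-\tfrac{1+u^2}{u}\right)^{n-1} s_1^n$ for $n\ge 1$, together with $V(x^0)=1$. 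Because Theorem~\ref{tr} asserts that ${\rm tr}$ is \emph{uniquely} determined by the free parameters $s_1,s_2,\ldots$, these parameters are algebraically independent over $R$; in particular the set $\{s_1^n\}_{n\ge 0}$ is $R$-linearly independent in $R(z,s_k)$, and the hypothetical relation forces every $c_n(A)=0$.

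The main obstacle I anticipate lies in the spanning argument, not in the independence one: after smoothing crossings one can be left with null-homotopic loops nested \emph{inside} a family of longitudes, and one must check that the removal of these trivial components yields exactly the scalar $(-A^2-A^{-2})$ per loop without introducing framing discrepancies, and that the order in which distinct trivial components are absorbed does not matter. Equivalently, the reduction must descend to a well-defined procedure on the quotient $R\mathcal{L}_{\rm fr}/\mathcal{S}$. Once this is confirmed, the linear-independence half becomes a clean application of the uniqueness of ${\rm tr}$ (imported from \cite{La2}), which is precisely what keeps the $s_k$ genuinely free.
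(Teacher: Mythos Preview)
The paper does not prove this theorem at all; it simply quotes it from Hoste--Kidwell \cite{HK} as an imported result and moves on. So there is no ``paper's own proof'' to compare against, and what you have written is a genuine proof rather than a reconstruction.

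Your argument is essentially correct and fits the braid-theoretic philosophy of the paper. Two comments:

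\textbf{Spanning.} Your stated worry is not a real obstacle. On the annulus every crossingless diagram is a disjoint union of simple closed curves, each either null-homotopic or isotopic to the core; nesting is resolved by ambient isotopy in $A\times I$ (a trivial circle can be slid past longitudes before removal). The relation $L\sqcup{\rm O}=(-A^2-A^{-2})L$ is an identity in the quotient module, not a procedural step, so the order of absorbing trivial components is immaterial. This half is entirely standard and needs no further justification.

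\textbf{Independence.} The use of $V$ is legitimate, but you should flag one point to avoid circularity: you need that $V$ descends to a well-defined $R$-linear map ${\rm KBSM(ST)}\to R(z,s_k)$, i.e.\ that $V$ satisfies the Kauffman bracket skein relation and the trivial-circle relation, and this must be known \emph{independently} of the \cite{HK} basis. In the paper's framework this is asserted via \cite{D0}: the quadratic relation~(\ref{quad}) together with the specialisation $z=-1/(u(1+u^2))$ forces the Kauffman-bracket skein relation algebraically, with no appeal to the structure of ${\rm KBSM(ST)}$. Once that is granted, your computation ${\rm tr}(t t_1'\cdots t_{n-1}')=s_1^{\,n}$ (which is exactly Remark~\ref{rm11}(ii)) and the fact that the $s_k$ are genuine indeterminates in $R(z,s_k)$ give the linear independence immediately. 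This is a cleaner route than the original diagrammatic argument in \cite{HK}, at the cost of importing the algebraic machinery of \cite{La2,D0}.
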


\begin{figure}[H]
\begin{center}
\includegraphics[width=5in]{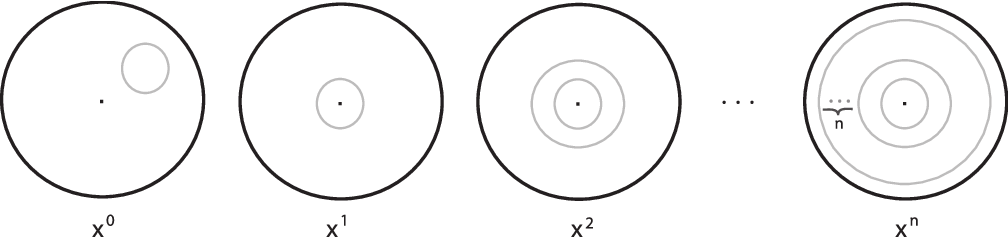}
\end{center}
\caption{The standard basis of KBSM(ST).}
\label{tur}
\end{figure}

In the braid setting, elements in the standard basis of KBSM(ST) correspond bijectively to the elements of the following set (for an illustration see Figure~\ref{tur1}):

\begin{equation}\label{Lpr}
\mathcal{B}^{\prime}_{{\rm ST}}=\{ t{t^{\prime}_1} \ldots {t^{\prime}_n}, \ n\in \mathbb{N} \}.
\end{equation}

\noindent In other words, the set $\mathcal{B}^{\prime}_{{\rm ST}}$ forms a basis for KBSM(ST) in terms of braids.

\begin{figure}[H]
\begin{center}
\includegraphics[width=4.7in]{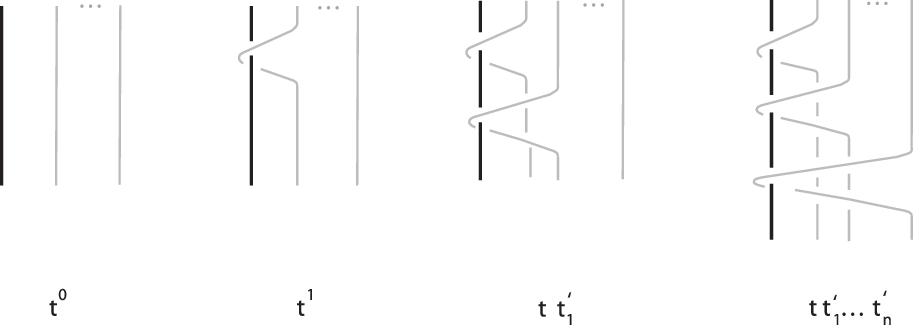}
\end{center}
\caption{The standard basis of KBSM(ST) in terms of mixed braids.}
\label{tur1}
\end{figure}

\begin{remark}\label{rm11}\rm
\begin{itemize}
\item[i.] The basis $\mathcal{B}^{\prime}_{{\rm ST}}$ is a subset of $\mathcal{H}$ and, in particular, $\mathcal{B}^{\prime}_{{\rm ST}}$ is a subset of $\Sigma^{\prime}=\bigcup_n\Sigma^{\prime}_n$. Moreover, and in contrast to elements in $\Sigma^{\prime}$, the elements in $\mathcal{B}^{\prime}_{{\rm ST}}$ have no `gaps' in the indices, the exponents are all equal to one and there are no `braiding tails'. 
\smallbreak
\item[ii.] The invariant $V$ defined in Theorem~\ref{inva} recovers KBSM(ST), since it gives distinct values to distinct elements of $\mathcal{B}^{\prime}_{{\rm ST}}$. Indeed, we have that ${\rm tr}(t{t^{\prime}_1} \ldots {t^{\prime}_n})=s_{1}^{n+1}$.
\end{itemize}
\end{remark}

\bigbreak

It follows from Remark~\ref{rm11}(ii), that in order to compute the Kauffman bracket skein module of $S^1\times S^2$, it suffices to extend the (most generic) invariant $V$ for knots and links in ST, following the ideas in \cite{DL2, DL3, DL4}. That is, it suffices to solve the infinite system of Equations~(\ref{eqbbm}), $V_{\widehat{\alpha}}\ =\  V_{\widehat{bbm_{\pm}(\alpha)}}$, for all $\alpha$ in a basis of KBSM(ST). Note that the above agree with the relation between KBSM($S^1\times S^2$) and KBSM(ST) that is presented in \cite{P}. Thus, we conclude that:

\[
{\rm KBSM(}S^1\times S^2 {\rm)}\, =\, \frac{{\rm KBSM(ST)}}{<a-bbm_{\pm}(a)>}\ \Leftrightarrow\ V_{\widehat{a}}\ =\  V_{\widehat{bbm_{\pm}(a)}}, \quad \ \forall\ a\ {\rm in\ a\ basis\ of\ KBSM(ST)}.
\]

These equations have very complicated formulations with the use of the standard basis $\mathcal{B}^{\prime}_{{\rm ST}}$. In Figure~\ref{tprbbm1} for example, we demonstrate the performance of a bbm on $t{t_1^{\prime}}^2\in \mathcal{B}^{\prime}_{{\rm ST}}$. To simplify the infinite system of equations, in \cite{D0} a new basis ${B}_{{\rm ST}}$ for the Kauffman bracket skein module of ST is presented, using the looping generator $t$ (for an illustration of elements in this basis see Figure~\ref{Newbasbr1}). More precisely, we have the following:

\begin{thm}\label{newbasis}
The following set is a basis for KBSM(ST):
\begin{equation}\label{basis}
B_{\rm ST}\ =\ \{t^{n},\ n \in \mathbb{N} \}.
\end{equation}
\end{thm}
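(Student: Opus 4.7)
The plan is to establish a triangular, invertible change of basis between the standard basis $\mathcal{B}^{\prime}_{\rm ST} = \{x^n\}_{n\in\mathbb{N}}$ of KBSM(ST) (where $x^0 = 1$ and $x^n = tt'_1 \cdots t'_{n-1}$ for $n \geq 1$) and the proposed basis $B_{\rm ST} = \{t^n\}_{n\in\mathbb{N}}$. Since $\mathcal{B}^{\prime}_{\rm ST}$ is already known to be a free basis, showing that this change of basis is invertible over $R$ will suffice to conclude that $B_{\rm ST}$ is also a basis.

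My approach to constructing the change of basis is diagrammatic. Starting from $t^n \in B_{1,1}$, which represents a single moving strand winding $n$ times around the fixed axis, I would introduce pairs of cancelling crossings between adjacent windings via Reidemeister II moves and then resolve them using the Kauffman bracket skein relation. Each resolution produces either a ``longitude-splitting'' term, in which two adjacent winds are replaced by two parallel strands and contribute a multiplicative factor of $x = t$ times a lower winding, or a simple lower-winding remainder contributing $t^{n-2}$. Iterating this procedure yields a Chebyshev-type recursion of the schematic form
\[
t^{n+1} \;=\; t \cdot t^n \;-\; t^{n-1} \;+\; (\text{scalar corrections coming from the skein coefficients and framing}),
\]
which, after unraveling, expresses $t^n$ as $x^n$ plus an $R$-linear combination of $\{x^k\}_{k<n}$. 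The resulting matrix is upper triangular with unit leading coefficient, hence invertible over $R$; inverting it writes each $x^n$ as an $R$-combination of $\{t^k\}_{k\leq n}$, so $B_{\rm ST}$ spans KBSM(ST).

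For linear independence, I would combine the change of basis with the universal invariant $V$ of Theorem~\ref{inva}. Using the Markov trace axiom $\text{tr}(a(t'_m)^k) = s_k\, \text{tr}(a)$ with $m=0$, $t'_0 = t$, $a=1$, $k=n$, one obtains $\text{tr}(t^n) = s_n$, hence $V_{\widehat{t^n}} = s_n$. Since $V$ recovers KBSM(ST) (Remark~\ref{rm11}(ii)) and the change-of-basis expression shows that $s_n$ corresponds, in the image of $V$, to a degree-$n$ polynomial in $s_1$ (the Chebyshev-type polynomial mentioned above), the values $\{V_{\widehat{t^n}}\}_n$ are $R$-linearly independent, forcing $\{t^n\}_n$ to be linearly independent in KBSM(ST).

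The main obstacle is making the Chebyshev-type recursion rigorous: each application of the Kauffman bracket skein relation introduces multiple terms whose topological interpretations as elements of $\mathcal{B}^{\prime}_{\rm ST}$ must be tracked carefully, and controlling the signs, the framing contribution $u^{2e}$, and the loop factor $-(A^2 + A^{-2})$ is required to ensure upper triangularity. An alternative, purely algebraic route would be to work inside $TL_{1,n}$: iteratively reduce products of looping generators via the quadratic relation (\ref{quad}) and the ideal relations (\ref{ideal}), and use the Markov trace's conjugation and stabilization axioms to absorb residual $\sigma_i$-factors into powers of $t$. Either route should close the induction once the base cases $n = 0, 1, 2$ are verified in closed form.
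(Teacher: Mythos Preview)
The paper does not include its own proof of this theorem; it is quoted from the references \cite{D0, D1, GM} and stated without argument. So there is no in-paper proof to compare against directly.

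That said, your triangular change-of-basis strategy is the standard one and is sound. In fact the paper develops exactly the needed tools later, in the reverse direction: Lemma~\ref{l2} and Proposition~\ref{prop2} express the standard basis element $\widehat{t}\,^{\,n}=tt'_1\cdots t'_{n-1}$ as $(-A^{-2})^{n-1}\,t^{n}$ plus an $R$-combination of lower $t^{k}$'s. The leading coefficient is a unit in $R=\mathbb{Z}[A^{\pm1}]$ (not literally $1$, as you write), so the transition matrix is invertible over $R$ and $B_{\rm ST}$ is a basis. Your schematic recursion ``$t^{n+1}=t\cdot t^{n}-t^{n-1}+\cdots$'' is ambiguous as written: on the left $t^{n+1}$ is the braid-group power, while on the right ``$t\cdot t^{n}$'' must mean the KBSM product of the longitude $x$ with the class of the braid $t^{n}$, and these two uses of the symbol $t$ are not the same. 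Disambiguating them is precisely the bookkeeping you flag as the ``main obstacle''; Lemma~\ref{l2} in the paper shows how to carry it out cleanly via the skein relation.

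Your separate linear-independence argument via $V$ is unnecessary and somewhat muddled. Once the transition matrix is triangular with units on the diagonal, linear independence of $\{t^{n}\}$ follows immediately from that of $\{x^{n}\}$; there is nothing more to prove. Moreover, invoking ${\rm tr}(t^{n})=s_{n}$ with the $s_{k}$ regarded as free indeterminates is in tension with $V$ being a KBSM(ST) invariant: if $t^{n}$ equals an $R$-combination of the $x^{k}$ in the skein module and $V$ respects the skein relations, then $s_{n}$ is \emph{forced} to equal the corresponding polynomial in $s_{1}$, not an independent parameter. Your final conclusion --- that $V_{\widehat{t^{n}}}$ is a degree-$n$ polynomial in $s_{1}$, hence the values are $R$-linearly independent --- is correct, but it is obtained from the change of basis itself, so it adds nothing beyond the triangularity argument you already have.
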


\begin{figure}
\begin{center}
\includegraphics[width=4.5in]{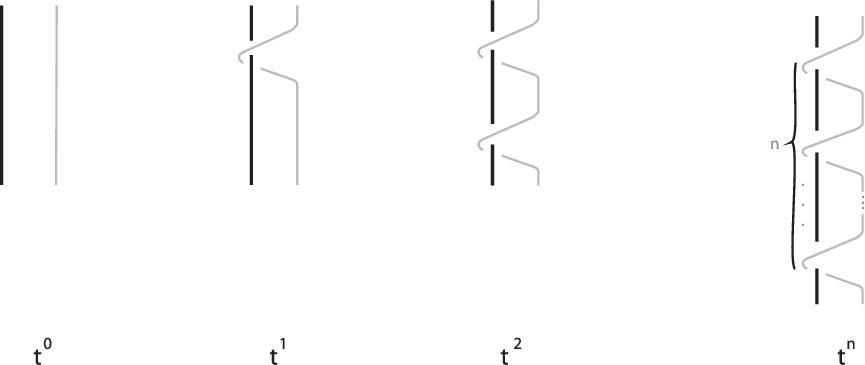}
\end{center}
\caption{Elements in the new basis of KBSM(ST).}
\label{Newbasbr1}
\end{figure}

\begin{remark}\rm
Note that elements in the basis $B_{\rm ST}$ have no crossings on the braid level, and thus, they are more ``natural'' and appropriate for our purpose.
\end{remark}

From Theorem~\ref{markov}, we observe that the equations $V_{\widehat{a}}\ =\  V_{\widehat{bbm_{\pm}(a)}}$ have particularly simple formulations with the use of the new basis $B_{\rm ST}$. Indeed, we have that $t^n\, \overset{bbm_{\pm}}{\rightarrow}\, t_1^{n}\, \sigma_1^{\pm 1}$. In Figure~\ref{tbbm1} we illustrate a bbm applied on $tt_1^2\in B_{\rm ST}$.

\begin{figure}[ht]
  \begin{subfigure}[b]{0.35\textwidth}
    \includegraphics[width=\textwidth]{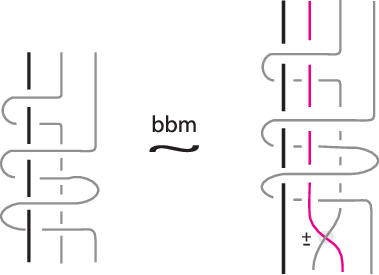}
    \caption{bbm performed on $t{t_1^{\prime}}^2 \in \mathcal{B}^{\prime}_{{\rm ST}}$.}
    \label{tprbbm1}
  \end{subfigure}
  \ \ \ \ \ \ \ \ \ \ \ \ \ 
  \begin{subfigure}[b]{0.35\textwidth}
    \includegraphics[width=\textwidth]{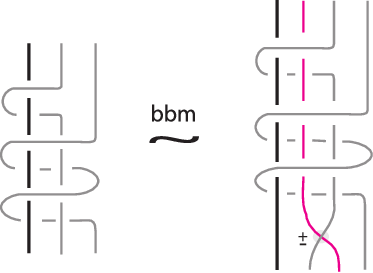}
    \caption{bbm performed on $tt_1^2 \in B_{ST}$.}
    \label{tbbm1}
  \end{subfigure}
\end{figure}

\section{The Kauffman bracket skein module of $S^1\times S^2$ via $TL_{1,n}$}\label{infs}

In this section we solve the infinite system of Equations~(\ref{eqbbm}), which is equivalent to computing the Kauffman bracket skein module of the lens spaces $S^1 \times S^2$. Recall that this infinite system of equations is obtained by performing braid band moves on elements in the basis $B_{\rm ST}$ of KBSM(ST) and by imposing to the generic invariant $V$ for knots and links in ST relations of the form $V_{\widehat{t^n}}\, =\, V_{\widehat{bbm(t^n)}}$, for all $n\in \mathbb{N}$, where $bbm(t^n)\, =\, t_1^n\, \sigma_1^{\pm 1}$. Recall also that the unknowns in the system are the $s_i$'s, coming from the fourth rule of the trace function in Theorem~\ref{tr}, that is, $tr(t^n)\, =\, s_n$, for all $n\in \mathbb{Z}$.

\smallbreak

We first observe that the equations obtained by applying the two types of bbm's on an element in $B_{\rm ST}$ other than the unknot, are different, and thus, both bbm's are needed in order to compute KBSM($S^1\times S^2$). In other words, we have that:

\smallbreak

{\it For $n\in \mathbb{N}\backslash\{0\}$, the equations $V_{\widehat{t^n}}\, =\, V_{\widehat{t_1^n\sigma_1}}$ and $V_{\widehat{t^n}}\, =\, V_{\widehat{t_1^n\sigma_1^{-1}}}$ are not equivalent.}

\smallbreak

Consider $t^n\in B_{{\rm ST}}$, where $n\geq 1$. Then, the equations obtained by performing bbm's on $t^n$ are:
\[
\begin{array}{lclc}
V_{\widehat{t^n}}\, =\, V_{\widehat{t_1^n\sigma_1^{\pm 1}}} & \overset{(+)-bbm}{\underset{(-)-bbm}{\Leftrightarrow}} & \begin{cases} s_n  & = \, \left(-\, \frac{1+u^2}{u}\right)\, u^{4n+2}\, tr(t_1^n\sigma_1)\\ s_n & = \, \left(-\, \frac{1+u^2}{u}\right)\, u^{4n-2}\, tr(t_1^n\sigma_1^{-1})\end{cases}
\end{array}
\]
We now deal with the equation obtained from a negative bbm and we underline expressions which are crucial for the next step.
\[
\begin{array}{lclc}
s_n & = & \left(-\, \frac{1+u^2}{u}\right)\, u^{4n-2}\, tr(t_1^n\underline{\sigma_1^{-1}}) & \overset{\sigma_1^{-1}=\sigma_1-(u-u^{-1})}{\Leftrightarrow}\\
&&&\\
s_n & = & \left(-\, \frac{1+u^2}{u}\right)\, u^{4n-2}\, \left[tr(t_1^n\sigma_1)\, -\, (u-u^{-1})\, tr(t_1^n)\right] & \Leftrightarrow\\
&&&\\
s_n & = & \underline{\left(-\, \frac{1+u^2}{u}\right)\, u^{4n+2}\, tr(t_1^n\sigma_1)}\,u^{-4} +\, \left(\frac{1+u^2}{u}\right)\, u^{4n-2}\,\, (u-u^{-1})\, tr(t_1^n) & \overset{(+)-bbm}{\Leftrightarrow}\\
&&&\\
s_n & = & u^{-4}\, s_n +\, \left(\frac{1+u^2}{u}\right)\, u^{4n-2}\,\, (u-u^{-1})\, tr(t_1^n) & \\
\end{array}
\]

Hence, for $n\in \mathbb{N}\backslash \{0\}$ we have that the equations obtained from the performance of the two types of bbm's are equivalent if and only if:
\begin{center}
\fbox{\begin{minipage}{15em}
\begin{equation}\label{eqtr}
\frac{u^4-1}{u^4} \cdot \left[s_n-u^{4n}\, tr(t_1^n) \right]\, =\, 0
\end{equation}
\end{minipage}}
\end{center}

We now evaluate $tr(t_1^n)$ for $n=1$ and we have:
\[
\begin{array}{lclcl}
tr(t_1) & = & tr(\underline{\sigma_1} t \sigma_1) \ =\ tr(t\underline{\sigma_1^2}) & = & (u-u^{-1})\, tr(t\sigma_1)+tr(t)\ =\\
&&&&\\
& = & (u-u^{-1})\, \underline{z}\cdot s_1 \ +\ s_1 & \overset{z=\frac{-1}{u(1+u^2)}}{=}& \frac{u^4+1}{u^2(1+u^2)}\cdot s_1,\\
\end{array}
\]

\noindent while substituting $n=1$ in Eq.~(\ref{eqtr}) we obtain that $tr(t_1)\, =\, \frac{1}{u^4}\cdot s_1$. Therefore, we have a contradiction and we conclude that in order to compute KBSM($S^1\times S^2$) we need to consider the effect of both types of bbm's on elements in $B_{{\rm ST}}$.

\subsection{Useful Lemmata}\label{infs1}

In this subsection we present a series of results toward the solution of the infinite system~(\ref{eqbbm}), by studying the effect of the braid band moves on the elements in the basis $B_{\rm ST}$. We recall first an ordering relation defined in \cite{DL2}. For that we shall need the notion of the {\it index} of a word $w$, denoted $ind(w)$.

\begin{defn}{\cite[Definition~1]{DL2}} \rm 
Let $w$ be a word in $\Sigma$ or in $\Sigma^{\prime}$. Then, the index of $w$, $ind(w)$, is defined to be the highest index of the $t_i$'s ($t_i^{\prime}$'s respectively) in $w$, by ignoring possible gaps in the indices of the looping generators and by ignoring the braiding parts. Moreover, the index of a monomial in $\sigma_i$'s is equal to $0$.
\end{defn}

\begin{defn}{\cite[Definition~2]{DL2}} \label{order}\rm
Let $w={t^{\prime}_{i_1}}^{k_1}\ldots {t^{\prime}_{i_{\mu}}}^{k_{\mu}}\cdot \beta_1$ and $u={t^{\prime}_{j_1}}^{\lambda_1}\ldots {t^{\prime}_{j_{\nu}}}^{\lambda_{\nu}}\cdot \beta_2$ in $\Sigma^{\prime}$, where $k_t , \lambda_s \in \mathbb{Z}$ for all $t,s$ and $\beta_1, \beta_2 \in H_n(q)$. Then, we define the following ordering in $\Sigma^{\prime}$:

\smallbreak

\begin{itemize}
\item[(a)] If $\sum_{i=0}^{\mu}k_i < \sum_{i=0}^{\nu}\lambda_i$, then $w<u$.

\vspace{.1in}

\item[(b)] If $\sum_{i=0}^{\mu}k_i = \sum_{i=0}^{\nu}\lambda_i$, then:

\vspace{.1in}

\noindent  (i) if $ind(w)<ind(u)$, then $w<u$,

\vspace{.1in}

\noindent  (ii) if $ind(w)=ind(u)$, then:

\vspace{.1in}

\noindent \ \ \ \ ($\alpha$) if $i_1=j_1, \ldots , i_{s-1}=j_{s-1}, i_{s}<j_{s}$, then $w>u$,

\vspace{.1in}

\noindent \ \ \  ($\beta$) if $i_t=j_t$ for all $t$ and $k_{\mu}=\lambda_{\mu}, k_{\mu-1}=\lambda_{\mu-1}, \ldots, k_{i+1}=\lambda_{i+1}, |k_i|<|\lambda_i|$, then $w<u$,

\vspace{.1in}

\noindent \ \ \  ($\gamma$) if $i_t=j_t$ for all $t$ and $k_{\mu}=\lambda_{\mu}, k_{\mu-1}=\lambda_{\mu-1}, \ldots, k_{i+1}=\lambda_{i+1}, |k_i|=|\lambda_i|$ and $k_i>\lambda_i$, then $w<u$,

\vspace{.1in}

\noindent \ \ \ \ ($\delta$) if $i_t=j_t\ \forall t$ and $k_i=\lambda_i$, $\forall i$, then $w=u$.

\end{itemize}

The ordering in the set $\Sigma$ is defined as in $\Sigma^{\prime}$, where $t_i^{\prime}$'s are replaced by $t_i$'s.
\end{defn}

Our goal is to express $bbm(t^n)\, =\, t_1^{n}\, \sigma_1^{-1}\, \widehat{\cong}\, t\, t_1^{n-1}\, \sigma_1$ for $n\in \mathbb{N}$, to sums of elements in $B_{{\rm ST}}$.

\begin{lemma}\label{l1}
For $n \in \mathbb{N}\backslash \{0\}$, the following holds in $TL_{1, n}$:
\[
\begin{array}{llcl}
{\rm i.} & t_1^n\, \sigma_1 & \widehat{\cong} & t^n\, \sigma_1\ +\ \underset{i=0}{\overset{n-1}{\sum}}\, (u-u^{-1})\, t^{i}\, {t_1}^{n-i}\\
&&&\\
{\rm ii.} & t_1^n\, \sigma_1^{-1} & \widehat{\cong} & t^n\, \sigma_1\ +\ \underset{i=1}{\overset{n-1}{\sum}}\, (u-u^{-1})\, t^{i}\, {t_1}^{n-i}\\
\end{array}
\]
\end{lemma}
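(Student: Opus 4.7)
My strategy is to bypass a direct induction on the mixed word $t_1^n\sigma_1$ (which is awkward because $\widehat{\cong}$ is not preserved by multiplication) and instead prove a cleaner \emph{algebraic} identity
\[
\sigma_1 t_1^n \;=\; t^n\sigma_1 \;+\; (u-u^{-1})\sum_{i=0}^{n-1} t^i\, t_1^{n-i}
\]
that holds as an equality in $TL_{1,n}$, and only at the very end apply a single conjugation move $t_1^n\sigma_1 \widehat{\cong} \sigma_1 t_1^n$. This conjugation is legitimate because $\sigma_1(t_1^n\sigma_1)\sigma_1^{-1} = \sigma_1 t_1^n$ in $B_{1,n}$ (and hence in $TL_{1,n}$), so the two words have isotopic closures.

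\textbf{Key preliminary identities.} First I would record the commutation $t\, t_1 = t_1\, t$, which is nothing but the type~B braid relation $\sigma_1 t\sigma_1 t = t\sigma_1 t\sigma_1$ rewritten using $t_1 = \sigma_1 t\sigma_1$. Next, using the quadratic relation (\ref{quad}), a direct computation gives the base identity
\[
\sigma_1 t_1 \;=\; \sigma_1^2\, t\, \sigma_1 \;=\; \bigl[(u-u^{-1})\sigma_1+1\bigr]t\sigma_1 \;=\; (u-u^{-1})\,t_1 \,+\, t\sigma_1.
\]

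\textbf{Main step: induction on $n$ for the algebraic identity.} Assuming the formula for $\sigma_1 t_1^{n-1}$, I would compute
\[
\sigma_1 t_1^n \;=\; (\sigma_1 t_1)\,t_1^{n-1} \;=\; (u-u^{-1})\,t_1^n \,+\, t\,\sigma_1 t_1^{n-1},
\]
substitute the inductive expression for $\sigma_1 t_1^{n-1}$, and use $t t_1 = t_1 t$ to absorb the leading factor of $t$ into the summation index. Re-indexing $i\mapsto i+1$ then produces exactly the sum $(u-u^{-1})\sum_{j=1}^{n-1} t^{j}t_1^{n-j}$, which combines with the isolated $(u-u^{-1})t_1^n$ term to give the full sum $\sum_{i=0}^{n-1}(u-u^{-1})t^i t_1^{n-i}$, plus the $t^n\sigma_1$ term from the tail. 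Applying the conjugation $t_1^n\sigma_1 \widehat{\cong} \sigma_1 t_1^n$ then yields part~(i).

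\textbf{Deducing (ii).} Part~(ii) follows algebraically from part~(i) by the second quadratic identity $\sigma_1^{-1} = \sigma_1 - (u-u^{-1})$. Multiplying on the right by $t_1^n$ and rearranging:
\[
t_1^n\sigma_1^{-1} \;=\; t_1^n\sigma_1 \,-\, (u-u^{-1})\,t_1^n \;\widehat{\cong}\; t^n\sigma_1 \,+\, \sum_{i=0}^{n-1}(u-u^{-1})t^i t_1^{n-i} \,-\, (u-u^{-1})\,t_1^n.
\]
The subtracted term exactly cancels the $i=0$ summand, producing the claimed formula.

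\textbf{Expected obstacle.} The only subtlety is remembering that $\widehat{\cong}$ is an equivalence on braid words (given by conjugation and stabilization inside trace), not a congruence for multiplication. The clean way around this is to keep everything as a genuine equality in $TL_{1,n}$ throughout the induction, and invoke conjugation exactly once at the end. The commutation $tt_1 = t_1 t$ is what makes the re-indexing in the induction step collapse neatly; without it the sum would not telescope into the stated closed form.
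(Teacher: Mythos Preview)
Your proof is correct and follows essentially the same inductive argument as the paper, using the base identity for $t_1\sigma_1$ (equivalently $\sigma_1 t_1$), the commutation $t\,t_1 = t_1\,t$, and deriving (ii) from (i) via $\sigma_1^{-1} = \sigma_1 - (u-u^{-1})$. Your choice to carry out the induction on the genuine algebraic equality $\sigma_1 t_1^n = t^n\sigma_1 + (u-u^{-1})\sum_{i=0}^{n-1} t^i t_1^{n-i}$ and conjugate only once at the end is a clean way to handle the subtlety you flagged, which the paper's proof treats more informally by working directly with $\widehat{\cong}$ throughout the induction.
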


\begin{proof}
We prove Lemma~\ref{l1}(i) by induction on $n\in \mathbb{N}$. The base of induction is $t_1\sigma_1\, \widehat{\cong}\, (u-u^{-1})\, t_1\ +\ t\, \sigma_1$, that holds. Assume now that the relations hold for $n$. Then, for $n+1$ we have that: 

\[
\begin{array}{lcl}
t_1^{n+1}\, \sigma_1 & = & t_1^{n}\, \underline{t_1\, \sigma_1}\ \widehat{\cong} \ (u-u^{-1})\, t_1^{n+1}\, +\, t\, \underline{t_1^{n}\, \sigma_1}\, \overset{ind.}{\underset{step}{=}}\\
&&\\
& = & (u-u^{-1})\, t_1^{n+1}\, +\, t^{n+1}\, \sigma_1\, +\, \underset{i=0}{\overset{n-1}{\sum}}\, (u-u^{-1})\, t^{i+1}\, {t_1}^{n-i}\, =\\
&&\\
& = & t^{n+1}\, \sigma_1\, +\, \underset{i=0}{\overset{n}{\sum}}\, (u-u^{-1})\, t^{i}\, {t_1}^{n+1-i}
\end{array}
\]
\end{proof}

We now express monomials of the form $t^{m}t_1^{n}\in \Sigma$, where $m, n\in \mathbb{N}$ in sums of elements of the form $t^{m}{t_1^{\prime}}^n\in \Sigma^{prime}$. In \cite{DL2}, a method for expressing monomials in $\Sigma$ to sums of monomials in $\Sigma^{\prime}$ (and vice-versa) is presented on the level of the generalized Hecke algebra of type B, $H_{1, n}$. This was achieved with the use of the ordering relation defined on the basic sets of $H_{1, n}$ (Definition~\ref{order}). More precisely, in \cite{DL2} it is shown that a monomial $\tau$ in $\Sigma$ can be written as a sum of elements in the set $\{{t^{\prime}_0}^{k_0}{t^{\prime}_1}^{k_1}\ldots {t^{\prime}_{m}}^{k_m}\ | \ k_i\ \geq\ k_{i+1},\ k_i \in \mathbb{Z}\setminus\{0\},\ \forall i \}$, such that the term $\tau^{\prime}$, which is obtained from $\tau$ by changing $t_i$ into $t_i^{\prime}$ for all $i$, is the highest order term in the sum. Recall now that the generalized Temperley-Lieb algebra of type B, $TL_{1, n}$, is a quotient of $H_{1, n}$ over the ideal generated by elements in Eq.~(\ref{ideal}) that only involves the $\sigma$'s, and since the only braiding generator in the monomials $bbm(t^n)$ is $\sigma_1^{\pm 1}$, it follows that the results presented in \cite{DL2} are also true in $TL_{1, n}$ for the monomials $bbm(t^n)=t_1^n\, \sigma_1^{\pm 1}$. In particular we have that for $n \in \mathbb{N}\backslash \{0\}$, the following holds in $TL_{1, n}$:
\[
\begin{array}{lclclcl}
t_1^n\, \sigma_1 & \widehat{\cong} & \underset{i=0}{\overset{n}{\sum}}\, a_i\, t^{i}\, {t_1^{\prime}}^{n-i} & {\rm \ \ and\ \ } &
t_1^n\, \sigma_1^{-1} & \widehat{\cong} & \underset{i=1}{\overset{n}{\sum}}\, a_i^{\prime}\, t^{i}\, {t_1^{\prime}}^{n-i}.
\end{array}
\]
\noindent where $a_i$ and $a_i^{\prime}$ coefficients for all $i$.

\smallbreak

We now deal with elements of the form $t^k\, {t^{\prime}_1}^{m}$, such that $k, m\in \mathbb{N}$ and $k\geq m>0$. 

\begin{lemma}\label{l2}
For $k, m\in \mathbb{N}$ and $k\geq m>0$, the following relation hold in $TL_{1, n}$:
\[
t^k\, {t^{\prime}_1}^{m}\ \widehat{\cong}\ -A^{-2}\, t^{k+m}\ -\ A^3\, t^{k+m-2}\ -\ A\, t^{k-1}\, {t_1^{\prime}}^{m-1}
\]
\end{lemma}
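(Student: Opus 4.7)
The plan is to expand $t^k(t_1')^m$ using the identity $(t_1')^m = \sigma_1 t^m \sigma_1^{-1}$ (which follows from $t_1' = \sigma_1 t \sigma_1^{-1}$ since $t$ commutes with itself) and then apply the quadratic relation~(\ref{quad}) to decompose $\sigma_1$ and $\sigma_1^{-1}$ into their Temperley--Lieb form. Matching the paper's quadratic $\sigma_i^2 = (u-u^{-1})\sigma_i + 1$ to the Kauffman bracket normalization forces $u = A^2$; writing $e_1 := \sigma_1 - A^2$ one obtains $\sigma_1 = A^2 + e_1$, $\sigma_1^{-1} = A^{-2} + e_1$, and $e_1^2 = -(A^2+A^{-2})e_1$.

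Expanding $t^k(t_1')^m = t^k \sigma_1 t^m \sigma_1^{-1}$ via this decomposition yields
\[
t^k (t_1')^m \;=\; t^{k+m} \;+\; A^2\, t^{k+m} e_1 \;+\; A^{-2}\, t^k e_1 t^m \;+\; t^k e_1 t^m e_1.
\]
The single-$e_1$ terms collapse under $\widehat{\cong}$: cyclic conjugation gives $t^k e_1 t^m \widehat{\cong} t^{k+m} e_1$ (moving the $t^m$-factor to the front, using that powers of $t$ commute with each other), and stabilization applied to $t^{k+m}\in B_{1,1}$ gives $t^{k+m}\sigma_1 \widehat{\cong} t^{k+m}$, whence $t^{k+m}e_1 = t^{k+m}\sigma_1 - A^2 t^{k+m} \widehat{\cong} (1-A^2)\, t^{k+m}$. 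For the double-$e_1$ piece $t^k e_1 t^m e_1$, a further expansion via $e_1 = \sigma_1 - A^2$ together with the identity $\sigma_1 t^m = (t_1')^m \sigma_1$ and the quadratic $\sigma_1^2 = (A^2-A^{-2})\sigma_1 + 1$ produces terms in $t^k(t_1')^m \sigma_1 \widehat{\cong} t^{k+m}$, a self-referential contribution of $t^k(t_1')^m$ itself, and further $t^{k+m}$-multiples.

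Collecting these contributions using only cyclic conjugation, stabilization, and the quadratic relation yields a tautological identity $0 = 0$, so the essential additional input is the type-B ideal relation $\sigma_{1,2} = 0$ from~(\ref{ideal}). Stabilizing the computation to $B_{1,3}$ and invoking $\sigma_{1,2} = 0$ (equivalently, the Temperley--Lieb relations $e_1 e_2 e_1 = e_1$ and $e_2 e_1 e_2 = e_2$ together with the compatibility $t\sigma_2 = \sigma_2 t$) supplies the genuinely new relation required to break the self-reference. This supplemental identity is precisely what extracts the recursive contribution $-A\, t^{k-1}(t_1')^{m-1}$ from the embedded $e_1 t^m e_1$ configuration, while the explicit $-A^{-2}\, t^{k+m}$ and $-A^3\, t^{k+m-2}$ pieces emerge from the polynomial bookkeeping of the earlier steps with the constants $u = A^2$ and $\delta = -(A^2+A^{-2})$. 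The main obstacle will be correctly packaging the three-strand ideal relation into a two-strand reduction via the stabilization move, and verifying that the coefficients produced match the target formula exactly after collecting all quadratic- and loop-value factors.
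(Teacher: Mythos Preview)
Your approach is quite different from the paper's and, as written, has a genuine gap.

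The paper's proof is a two-line diagrammatic argument: one draws the closure of $t^{k}(t_1')^{m}$ and applies the Kauffman bracket skein relation $L_+ = A L_0 + A^{-1} L_\infty$ directly to the crossings coming from $(t_1')^{m} = \sigma_1 t^{m}\sigma_1^{-1}$. The three smoothings that survive are exactly the three terms $-A^{-2}t^{k+m}$, $-A^{3}t^{k+m-2}$, and $-A\,t^{k-1}(t_1')^{m-1}$; no algebra in $TL_{1,n}$ beyond the local skein relation is used, and in particular the three-strand ideal $\sigma_{1,2}=0$ never enters.

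Your plan, by contrast, works entirely inside the algebra via the quadratic relation and Markov moves, and you yourself observe that this collapses to $0=0$. That observation is correct and is the real problem: conjugation, stabilization, and the Hecke quadratic are all \emph{trace-preserving} identities, so they can never produce a nontrivial linear relation among distinct basis elements of the skein module. The missing ingredient is not the ideal $\sigma_{1,2}=0$; it is the skein relation itself, which in algebraic language is the decomposition $\sigma_1 = A\cdot 1 + A^{-1}U_1$ with $U_1$ the \emph{cup-cap tangle}, an element that is not a braid and cannot be manufactured from Markov moves. Your element $e_1=\sigma_1 - A^2$ is a Hecke idempotent, not the Kauffman cup-cap (with the quadratic $\sigma_1^2=(u-u^{-1})\sigma_1+1$ the eigenvalues are $u,-u^{-1}$, whereas the Kauffman crossing has eigenvalues $A,-A^{-3}$; the identification $u=A^2$ does not reconcile these without an additional writhe rescaling that you do not carry out). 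Consequently the expansion you write down never sees the $L_\infty$-smoothing, and the appeal to $\sigma_{1,2}=0$ on three strands is a non sequitur: that relation constrains how the trace factors through $TL_{1,3}$, but the identity to be proved is a two-strand skein identity, and you give no mechanism by which stabilizing to three strands and imposing $\sigma_{1,2}=0$ would output the specific term $-A\,t^{k-1}(t_1')^{m-1}$.

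In short, the step ``the ideal relation supplies the genuinely new relation required to break the self-reference'' is asserted but not performed, and there is no reason to expect it can be performed along the lines you sketch. The direct route is the paper's: resolve the two crossings of $\sigma_1 t^{m}\sigma_1^{-1}$ diagrammatically and read off the three terms.
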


\begin{proof}
We consider $t^k\, {t^{\prime}_1}^{m}$ and we apply the Kauffman bracket skein relation as demonstrated in Figure~\ref{lem1}. Applying the skein relations and conjugation, we may express these elements in terms of elements in the basis $B_{\rm ST}$, together with elements of the form $t^{k-i}\, {t^{\prime}_1}^{m-i},\ i>0$, of lower order than $t^k\, {t^{\prime}_1}^{m}$. The process is repeated until we are left with elements in $B_{\rm ST}$.

\begin{figure}[H]
\begin{center}
\includegraphics[width=5.7in]{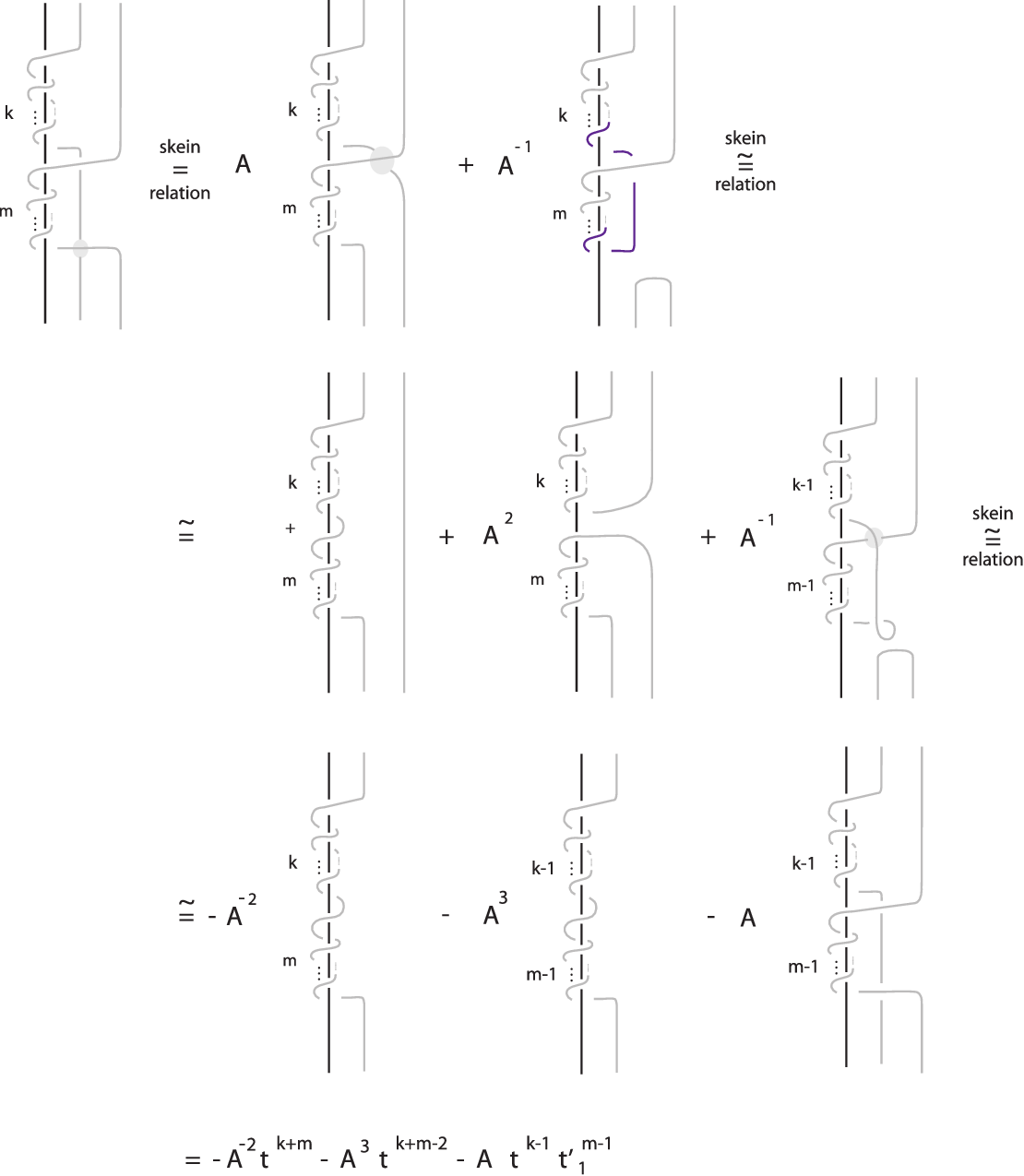}
\end{center}
\caption{The proof of Lemma~\ref{l2}.}
\label{lem1}
\end{figure}

\end{proof}

\begin{remark}\rm
In Lemma~\ref{l2}, we considered elements of the form $t^k\, {t^{\prime}_1}^{m}$, where $k\geq m>0$. If $k<m$, then we consider the closure operation on the mixed braids and we order the exponents, since the $t_i^{\prime}$'s are conjugates, i.e. $t^k\, {t^{\prime}_1}^{m}\, \widehat{=}\, t^m\, {t^{\prime}_1}^{k}$. Then, we braid the resulting loops again (see Figure~\ref{rem1}). 
\end{remark}

\begin{figure}[H]
\begin{center}
\includegraphics[width=5in]{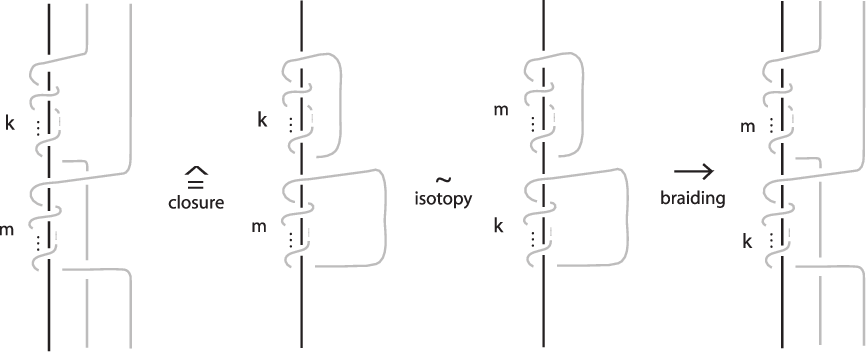}
\end{center}
\caption{Conjugation on the $t_i^{\prime}$'s.}
\label{rem1}
\end{figure}

\begin{cor}\label{cor1}
For $k, m\in \mathbb{N}$ and $k\geq m>0$, the following relation hold in $TL_{1, n}$:
\[
t^k\, {t^{\prime}_1}^{m}\ \widehat{\cong}\ -A^{-2}\, t^{k+m}\ +\ \underset{i=0}{\overset{\lfloor \frac{k+m-2}{2}\rfloor}{\sum}}\, a_i\, t^{2i},
\]
\noindent where $a_i$ are coefficients.
\end{cor}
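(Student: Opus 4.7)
The plan is to prove Corollary~\ref{cor1} by induction on $m\geq 1$, iterating Lemma~\ref{l2} to peel off one $t_1^{\prime}$ at a time.

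For the base case $m=1$ (with $k\geq 1$), a single application of Lemma~\ref{l2} gives
\[
t^k\, t_1^{\prime}\ \widehat{\cong}\ -A^{-2}\, t^{k+1}\ -\ A^3\, t^{k-1}\ -\ A\, t^{k-1}\ =\ -A^{-2}\, t^{k+1}\ -\ (A^3+A)\, t^{k-1},
\]
which is already a linear combination of basis elements with the required leading term $-A^{-2}\, t^{k+m}$ and a single lower-order term of exponent $k+m-2$.

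For the inductive step, suppose the result holds for pairs with index $m-1$. Given $t^k\, {t_1^{\prime}}^m$ with $k\geq m\geq 2$, Lemma~\ref{l2} yields
\[
t^k\, {t_1^{\prime}}^m\ \widehat{\cong}\ -A^{-2}\, t^{k+m}\ -\ A^3\, t^{k+m-2}\ -\ A\, t^{k-1}\, {t_1^{\prime}}^{m-1}.
\]
Since $k\geq m$ implies $k-1\geq m-1>0$, the inductive hypothesis applies to the residual term $t^{k-1}\, {t_1^{\prime}}^{m-1}$, rewriting it as $-A^{-2}\, t^{(k-1)+(m-1)}$ plus a linear combination of lower powers $t^j$ with $j\leq (k-1)+(m-1)-2=k+m-4$. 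Substituting back, the only contribution to $t^{k+m}$ comes from the first application of Lemma~\ref{l2} (the inductive hypothesis contributes only exponents $\leq k+m-2$), so the leading coefficient remains $-A^{-2}$, and all remaining terms are powers $t^j$ with $j\leq k+m-2$.

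The key observation that makes the index range in the stated sum come out correctly is that each recursive call to Lemma~\ref{l2} lowers the total exponent $k+m$ by exactly $2$ (either producing a "basis" term of that reduced total or passing down a residual $t^{k'}{t_1'}^{m'}$ with $k'+m'=k+m-2$). Consequently, all exponents of $t$ that appear in the final expansion differ from $k+m$ by an even amount and are at most $k+m-2$, which is exactly what the upper bound $\lfloor(k+m-2)/2\rfloor$ on the index $i$ in the sum $\sum a_i\, t^{2i}$ records. The main (mild) obstacle is bookkeeping: one must verify that no further $t^{k+m}$ term is generated deeper in the recursion, which the induction hypothesis handles automatically by bounding the exponents that it produces.
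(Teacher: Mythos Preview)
Your proof is correct and follows precisely the approach the paper intends: the result is stated there as an immediate corollary of Lemma~\ref{l2}, and the implicit argument is exactly the iteration you formalize by induction on $m$, peeling off one $t_1^{\prime}$ at a time and noting that each application of Lemma~\ref{l2} lowers the total exponent by~$2$. Your bookkeeping for the leading coefficient and for the parity of the surviving exponents matches how the paper later uses this corollary in the proof of Theorem~\ref{thm1}.
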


\subsection{The infinite system}

By general position arguments, for computing the Kauffman bracket skein module of $S^1 \times S^2$, we may first consider the effect of negative bbm's on elements in $B_{\rm ST}$, obtain a spanning set for $B^{S^1\times S^2}_{-}\, :=\, \frac{B_{\rm ST}}{<\, a-bbm_{-}(a)\, >}$ and then study the effect of positive bbm's on elements in $B^{S^1\times S^2}_{-}$. With a slight abuse of notation, we denote the above as follows:

\[
{\rm KBSM}\left(S^1 \times S^2\right)\, =\, \frac{{\rm B}_{ST}}{<\, a-bbm_{\pm}(a)\, >}\, =\, \frac{{\rm B}_{ST}/<a-bbm_-(a)>}{<a-bbm_+(a)>}\, =\, \frac{B^{S^1\times S^2}_{-}}{<a-bbm_+(a)>}.
\]

Hence, we consider elements in $B_{ST}$ and perform negative braid band moves, obtaining an infinite system of equations, the solution of which corresponds to a basis for $B^{-}_{S^1\times S^2}$. We have the following:

\begin{thm}\label{thm1}
$B^{-}_{S^1\times S^2}$ is generated by the unknot $t^0$ and $t$. 
\end{thm}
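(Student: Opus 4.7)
The plan is to prove Theorem~\ref{thm1} by strong induction on $n$, showing that every basis element $t^n\in B_{\rm ST}$ reduces modulo the negative braid band move relations to an $R$-linear combination of $t^0$ and $t$. The base cases $n=0,1$ need no argument, since these are precisely the asserted generators. For the inductive step with $n\ge 2$, I start from the negative bbm identity
\[
t^n\ \widehat{\cong}\ t_1^n\, \sigma_1^{-1}\qquad \text{in}\ B^{-}_{S^1\times S^2},
\]
and apply Lemma~\ref{l1}(ii) to rewrite the right-hand side as $t^n\sigma_1+\sum_{i=1}^{n-1}(u-u^{-1})\, t^i\, t_1^{n-i}$. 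The goal is then to convert each summand into an explicit $R$-linear combination of basis elements $t^k$, collect terms, and solve for $t^n$.

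The conversion splits into two parts. First, the stabilizing factor $t^n\sigma_1$: expanding the crossing via the Kauffman bracket skein relation yields an identity-resolution piece, which closes in $B_{1,2}$ to $(-A^2-A^{-2})t^n$ (using the unknot-factor relation in KBSM), and a cup-cap resolution piece $t^n e_1$, which merges the two moving strands into a single strand that still carries $n$ loops around $\widehat{I}$ and therefore closes to a scalar multiple of $t^n$. Altogether one obtains $\widehat{t^n\sigma_1}=\mu\, t^n$ for an explicit scalar $\mu\in R$. Second, for each mixed monomial $t^it_1^{n-i}$ with $1\le i\le n-1$, I use the change-of-basis procedure between $\Sigma$ and $\Sigma^{\prime}$ recalled immediately before Lemma~\ref{l2} (which transfers intact to $TL_{1,n}$ since the ideal~(\ref{ideal}) involves only $\sigma$-generators) to express $t^it_1^{n-i}$ as an $R$-linear combination of monomials of the form $t^j{t_1^{\prime}}^{n-j}$ with $j\le i$, and then I apply Corollary~\ref{cor1} (together with the conjugation trick illustrated in the remark following Lemma~\ref{l2} when the primed-generator exponent exceeds the $t$-exponent) to rewrite each such monomial as $-A^{-2}t^n$ plus an $R$-linear combination of even powers $t^{2k}$ with $2k\le n-2$.

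Collecting the two contributions and substituting back into the bbm identity produces a relation of the shape
\[
(1-c_n)\, t^n\ =\ \sum_{k=0}^{n-1}\beta_{n,k}\, t^k\qquad \text{in}\ B^{-}_{S^1\times S^2},
\]
where $c_n\in R$ is the total coefficient of $t^n$ arising from $\mu$ together with the $(u-u^{-1})(-A^{-2})$ leading-term contributions from Corollary~\ref{cor1}, and the $\beta_{n,k}\in R$ are the collected lower-order coefficients. Provided the scalar $1-c_n$ can be cancelled in the quotient, the inductive hypothesis applied to each $t^k$ with $k<n$ shows that the right-hand side already lies in the $R$-span of $t^0$ and $t$, and hence so does $t^n$, completing the induction.

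The main obstacle is precisely the behaviour of the scalar $1-c_n$. One must verify that the $t^n$-contributions from the stabilization $\widehat{t^n\sigma_1}$ and from the $n-1$ leading terms $-A^{-2}t^n$ weighted by $(u-u^{-1})$ combine to a coefficient against which the reduction can genuinely be carried through in the module quotient, ruling out a degenerate $c_n=1$ that would render the top-degree part of the relation vacuous. This is the kind of careful coefficient bookkeeping already performed at the beginning of \S\ref{infs} in the compatibility check between the two types of bbm's, where the clean factor $u^4-1$ emerged; guided by that precedent and by the normalization $z=-1/(u(1+u^2))$, I expect the analogous computation here to produce a scalar that allows the induction to go through, possibly with a torsion interpretation that would be consistent with the paper's later claims about detection of torsion in KBSM$(S^1\times S^2)$.
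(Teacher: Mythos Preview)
Your strategy is essentially the paper's: apply the negative bbm, expand $t_1^n\sigma_1^{-1}$ via Lemma~\ref{l1}(ii), pass to the $\Sigma'$-side using the change-of-basis procedure, and then invoke Corollary~\ref{cor1} to land in $B_{\rm ST}$. Two stylistic differences: the paper runs the computation through the trace variables $s_k={\rm tr}(t^k)$ and the invariant $V$ rather than directly with skein elements, and it makes the parity separation explicit (even $n$ reduce to even powers, odd $n$ to odd powers), which your sketch suppresses. Your treatment of $\widehat{t^n\sigma_1}$ via a skein resolution is also more laborious than necessary: this is simply a positive stabilization of $t^n$, so on the trace side it contributes $z\,s_n$ (equivalently, a framing factor times $t^n$ in KBSM).

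The obstacle you single out is real, and your hoped-for resolution does not occur: the leading coefficient is not a unit in $\mathbb{Z}[A^{\pm1}]$ (in the diagrammatic normalization of \S\ref{kbsmunbr} it comes out as $1-A^{2n+4}$), so one cannot literally cancel it to express $t^n$ in the $R$-span of lower powers. The paper's own proof does not handle this point more rigorously than you do; it writes ``by ignoring the coefficients'' and then solves the resulting triangular system for $s_n$ in terms of $s_0$ (even case) or $s_1$ (odd case). In other words, Theorem~\ref{thm1} and its corollary are to be read as a statement about the solution set of the negative-bbm equations---the $s_n$ are determined by $s_0,s_1$---with the genuine integral/torsion bookkeeping deferred to Theorem~\ref{mthm} and to \S\ref{kbsmunbr}. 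So your plan matches the paper's, your diagnosis of the delicate step is correct, but you should not expect $1-c_n$ to become invertible; rather, that non-invertibility is exactly what produces the torsion detected later.
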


\begin{proof}
We consider the infinite system of equations $V_{\widehat{t^n}}\, =\, V_{\widehat{t_1^n\, \sigma_1^{-1}}}$, for $n\in \mathbb{N}$. We have that:
\[
\begin{array}{lrcrcl}
{\rm For}\ n=0: & 1 \ \overset{bbm_-}{\rightarrow}\ \sigma_1^{-1} & \Rightarrow & V_{\hat{1}}\, =\, V_{\hat{\sigma_1^{-1}}} & \Rightarrow & 1 \ = \ \left(-\, \frac{1+u^2}{u} \right)\, u^{-2}\, tr(\sigma_1^{-1})\, \Rightarrow\\
&&&&&\\
& & & 1 & = & -\, \frac{1+u^2}{u^3}\, \left(z-(u-u^{-1}) \right)\ \overset{z=-\, \frac{1}{u(1+u^2)}}{\Rightarrow}\\
&&&&&\\
& & & 1 & = & 1 \\
&&&&&\\
{\rm For}\ n=1: & t \ \overset{bbm_-}{\rightarrow}\ t_1\, \sigma_1^{-1} & \Rightarrow & V_{\hat{t}}\, =\, V_{\widehat{t_1\, \sigma_1^{-1}}} & \Rightarrow & s_1 \ = \ \left(-\, \frac{1+u^2}{u} \right)\, u^{2}\, tr(t_1\, \sigma_1^{-1})\, \Rightarrow\\
&&&&&\\
& & & s_1 & = & -\, -\, \frac{1+u^2}{u}\, u^2\, z\, s_1\ \Rightarrow\\
&&&&&\\
& & & s_1 & = & s_1\\
\end{array}
\]

Hence, $t^0$, which corresponds to the unknot, and $t$ are {\it free} in $B^{-}_{S^1\times S^2}$.

\smallbreak

Consider now $n\in \mathbb{N}$ such that $n\geq 2$. Then:

\[
t^n \ \overset{bbm_-}{\longrightarrow} \ t_1^n\, \sigma_1^{-1} \ \widehat{\cong} \ t\, t_1^{n-1}\, \sigma_1 \ \widehat{\underset{{\rm [13]}}{\cong}} \ \underset{i=1}{\overset{n}{\sum}}\, a_i\, t^{i}\, {t_1^{\prime}}^{n-i} \ \underset{{\rm Cor.}~\ref{cor1}}{\widehat{{\cong}}} \ a_n\, t^{n}\ +\ \underset{i=0}{\overset{\lfloor \frac{n-2}{2}\rfloor}{\sum}}\, a_i\, t^{2i}
\]

\noindent where $a_i$ coefficients for all $i$.

\noindent Hence, $tr(t_1^n\, \sigma_1^{-1})\, =\, a_n\, s_{n}\ +\ \underset{i=1}{\overset{\lfloor \frac{n}{2} \rfloor}{\sum}}\, b_i\, s_{n-2i}$, for all $n\in \mathbb{N}$. Thus, by ignoring the coefficients, we have the following:

\[
\begin{array}{lcc}
V_{\hat{t^n}}\, =\, V_{\widehat{t_1^n\sigma_1^{-1}}} & \Rightarrow & s_n\, =\, \begin{cases} \underset{i=0}{\overset{(n-2)/2}{\sum}}\, s_{2i} &,\ {\rm for}\ n\ {even}\\
\underset{i=0}{\overset{(n-3)/2}{\sum}}\, s_{2i+1} &,\ {\rm for}\ n\ {odd}
\end{cases}\, \Rightarrow\\
\end{array}
\]

\[
\begin{array}{cc}
{\rm \underline{n-even}} & {\rm {\underline{n-odd}}}\\
&\\
s_2\, =\, s_0 & s_3\, =\, s_1\\
&\\
s_4\, =\, s_0\, +\, s_2 & s_5\, =\, s_1\, +\, s_3\\
&\\
\vdots & \vdots\\
&\\
s_{2k}\, =\, \underset{i=0}{\overset{k-1}{\sum}}\, s_{2i} & s_{2k+1}\, =\,  \underset{i=0}{\overset{k-1}{\sum}}\, s_{2i+1}\\
&\\
\Downarrow & \Downarrow\\
 s_{2k}\, \sim\, a\cdot s_0\ & \ s_{2k+1}\, \sim \, b\cdot s_1,
\end{array}
\]
\noindent where $a, b$ coefficients.

\smallbreak

\noindent Recall now that $tr(t)=s_1$ and $tr(t^0)=s_0$. The result follows.
\end{proof}

\begin{cor}
The set $\{t^0,\, t\}$ forms a basis for $B^{-}_{S^1\times S^2}$.
\end{cor}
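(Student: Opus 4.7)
The plan is to combine Theorem~\ref{thm1} with the faithfulness of the invariant $V$ on $B_{\mathrm{ST}}$ to upgrade ``generating set'' to ``basis.'' Theorem~\ref{thm1} already hands us generation: every $t^n$ with $n\ge 2$ reduces, modulo negative braid band moves, to a scalar multiple of $t^0$ (when $n$ is even) or $t$ (when $n$ is odd). So the remaining obligation is linear independence of $\{t^0, t\}$ in $B^{-}_{S^1\times S^2}$.

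The key idea for independence is to exhibit a nontrivial linear functional on $B^{-}_{S^1\times S^2}$ that separates $t^0$ from $t$. The natural candidate is the invariant $V$ itself: by Remark~\ref{rm11}(ii), $V$ descends from KBSM(ST), where it already gives distinct values to distinct basis elements ($\mathrm{tr}(t^n) = s_n$ in the unevaluated trace, and these $s_n$ are algebraically independent free parameters by Theorem~\ref{tr}). What I would verify is that $V$ factors further through the quotient $B^{-}_{S^1\times S^2}$ precisely when we set $s_n$ for $n\ge 2$ to the values dictated by the $n\ge 2$ equations computed in the proof of Theorem~\ref{thm1}, while leaving $s_0$ and $s_1$ unconstrained. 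This is exactly what the $n=0$ and $n=1$ calculations in the proof of Theorem~\ref{thm1} show: both reduce to the tautologies $1=1$ and $s_1=s_1$, placing no relation between $t^0$ and $t$.

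Concretely, the plan is: first, invoke Theorem~\ref{thm1} to reduce the claim to linear independence of $\{t^0,t\}$. Second, suppose for contradiction that $\alpha\cdot t^0 + \beta\cdot t = 0$ in $B^{-}_{S^1\times S^2}$ for some $\alpha,\beta\in R$ not both zero. Third, apply the invariant $V$ (or equivalently the Markov trace $\mathrm{tr}$ evaluated at $z = -1/(u(1+u^2))$) with $s_0$ and $s_1$ treated as indeterminates; this yields $\alpha\, s_0 + \beta\, s_1 = 0$ as a polynomial identity in $R[s_0,s_1]$, forcing $\alpha=\beta=0$. The legitimacy of treating $s_0,s_1$ as free indeterminates is exactly the content of the $n=0$ and $n=1$ verifications already carried out in Theorem~\ref{thm1}, together with the fact that for $n\ge 2$ the induced relation expresses $s_n$ in terms of $s_0$ or $s_1$ alone, so no further constraint on $(s_0,s_1)$ appears.

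The one point that requires a little care, and that I expect to be the only subtle step, is confirming that the infinite cascade of relations for $n\ge 2$ does not feed back to impose a relation between $s_0$ and $s_1$. This is clear from the parity split in the proof of Theorem~\ref{thm1}: even-indexed $s_{2k}$ depend only on $s_0$ and odd-indexed $s_{2k+1}$ only on $s_1$, so the two parameters remain algebraically independent in the quotient. Once this is noted, the corollary is immediate.
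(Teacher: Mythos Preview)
Your argument is correct and is essentially the reasoning the paper leaves implicit: the paper states this corollary with no proof, relying on the line in the proof of Theorem~\ref{thm1} that the tautologies $1=1$ and $s_1=s_1$ mean $t^0$ and $t$ are ``free'' in $B^{-}_{S^1\times S^2}$. You have spelled out why those tautologies actually yield linear independence, via the descent of $V$ to the quotient with $s_1$ still a free parameter, and the parity observation that no relation between the even and odd towers can arise.

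One small correction: $s_0$ is not a free indeterminate. By rule~(2) of Theorem~\ref{tr}, $\mathrm{tr}(1)=1$, and rule~(4) with $k=0$ forces $s_0=1$. So the relation you obtain is $\alpha + \beta\, s_1 = 0$ in $R[s_1]$, not in $R[s_0,s_1]$. This does not damage the argument at all, since $s_1$ alone being free already forces $\alpha=\beta=0$; but you should phrase it that way.
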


We now consider the effect of positive braid band moves on the elements $t^0$ and $t$. We have the following result:

\begin{thm}\label{mthm}
The free part of KBSM$\left(S^1 \times S^2\right)$ is generated by the unknot and the torsion part is generated by $t$.
\end{thm}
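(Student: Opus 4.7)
The plan is to leverage Theorem~\ref{thm1}, which shows that $B^{-}_{S^1\times S^2}$ is generated by $t^0$ and $t$, and then to impose the remaining positive braid band move equations $V_{\widehat{t^n}}\, =\, V_{\widehat{t_1^n \sigma_1}}$ one generator at a time, separating free from torsion behaviour.

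First I will apply $bbm_+$ to $t^0\, =\, 1$. A direct calculation gives $V_{\widehat{\sigma_1}}\, =\, \left(-\frac{1+u^2}{u}\right) u^{2}\, tr(\sigma_1)\, =\, -u(1+u^2)\, z$, which, after substituting $z\, =\, -\frac{1}{u(1+u^2)}$, collapses to $1\, =\, V_{\widehat{1}}$. Thus the equation for $n=0$ is trivially satisfied, and $t^0$ survives as a free $R$-generator of ${\rm KBSM}(S^1\times S^2)$.

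Next I apply $bbm_+$ to $t$, producing $t_1\sigma_1 \in B_{1,2}$. Writing $t_1\sigma_1\, =\, \sigma_1 t \sigma_1^2$ and using the quadratic relation~(\ref{quad}), trace cyclicity, and the Markov rules of Theorem~\ref{tr}, the trace $tr(t_1\sigma_1)$ reduces to an explicit scalar multiple of $s_1\, =\, tr(t)$. Setting $V_{\widehat{t}}\, =\, V_{\widehat{t_1\sigma_1}}$, a short simplification yields a torsion relation of the form $P(u)\, s_1\, =\, 0$ with $P(u)$ a nonzero Laurent polynomial, identifying $t$ as a torsion element.

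The remaining step is to verify that $bbm_+$ applied to $t^n$ for $n \geq 2$ introduces no further independent relations. The cleanest route is via Equation~(\ref{eqtr}), which encodes the additional content of $bbm_+$ beyond $bbm_-$ as $(u^4-1)[s_n - u^{4n}\, tr(t_1^n)]\, =\, 0$ for every $n$. I will expand $tr(t_1^n)$ into elements of $B_{{\rm ST}}$ using Lemmas~\ref{l1} and~\ref{l2}, Corollary~\ref{cor1}, and the conversion between $\Sigma$-type and $\Sigma^{\prime}$-type monomials from \S\ref{infs1}. A parity argument---that even $n$ feeds only the $s_0$-slot and odd $n$ only the $s_1$-slot---should then show that the even-$n$ equations collapse to identities on $s_0$, while the odd-$n$ equations reduce to $R$-multiples of the relation $P(u)\, s_1\, =\, 0$ obtained above. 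The hard part will be this combinatorial bookkeeping: confirming that no spurious annihilator of $s_0$ leaks out of the infinite system and that every odd-$n$ relation truly lies in the ideal generated by $P(u)$. Once settled, we conclude that ${\rm KBSM}(S^1\times S^2)\, \cong\, R \oplus (Rt)/(P(u)\, t)$, so the free part is generated by the unknot and the torsion part by $t$.
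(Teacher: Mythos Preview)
Your computations for $n=0$ and $n=1$ are exactly what the paper does, and those two cases constitute the paper's entire proof: having established in Theorem~\ref{thm1} that $B^{-}_{S^1\times S^2}$ is spanned by $t^0$ and $t$, the paper applies the positive braid band move only to these two elements, obtains $1=1$ for $t^0$ and the relation $(1-u^6)(1-u^2)\,s_1=0$ for $t$, and stops. For the qualitative content of Theorem~\ref{mthm} this is all that is claimed, and here your argument and the paper's coincide.

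Where your proposal goes wrong is in the third step and in the final conclusion. The paper does \emph{not} analyse the $bbm_+$ equations for $n\geq 2$ in this proof, and it explicitly disclaims any closed formula for the torsion (see the sentence immediately following the theorem). Your expectation that ``every odd-$n$ relation truly lies in the ideal generated by $P(u)$'' would force the torsion submodule to be the single cyclic module $Rt/(P(u)\,t)$. That contradicts the diagrammatic computation of \S\ref{kbsmunbr} and the result of Hoste--Przytycki \cite{HP1}, which give the torsion as the infinite direct sum $\bigoplus_{i\geq 0} R/(1-A^{2i+4})$; in particular ${\rm KBSM}(S^1\times S^2)$ is not finitely generated over $R$, so it cannot have the form $R\oplus Rt/(P(u)\,t)$ asserted in your last line. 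The ``hard part'' you flag therefore will not resolve in the direction you anticipate---distinct odd $n$ contribute genuinely inequivalent annihilators rather than multiples of a single $P(u)$. If you drop the structural claim in your final sentence and retain only the $n=0,1$ analysis, what remains is essentially the paper's own argument.
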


\begin{proof}
We consider the elements in the basis of $B^{-}_{S^1\times S^2}$ and we study the effect of a positive braid band move on them. We have that:

\[
\begin{array}{lrcrcl}
{\rm For}\ n=0: & 1 \ \overset{bbm_+}{\rightarrow}\ \sigma_1 & \Rightarrow & V_{\hat{1}}\, =\, V_{\hat{\sigma_1}} & \Rightarrow & 1 \ = \ \left(-\, \frac{1+u^2}{u} \right)\, u^{-2}\, tr(\sigma_1)\, \Rightarrow\\
&&&&&\\
& & & 1 & = & -\, (1+u^2)\, u\, z \ \overset{z=-\, \frac{1}{u(1+u^2)}}{\Rightarrow}\\
&&&&&\\
& & & 1 & = & 1 \qquad {\rm (free\ part)}\\
&&&&&\\
{\rm For}\ n=1: & t \ \overset{bbm_+}{\rightarrow}\ t_1\, \sigma_1 & \Rightarrow & V_{\hat{t}}\, =\, V_{\widehat{t_1\, \sigma_1}} & \Rightarrow & s_1 \ = \ \left(-\, \frac{1+u^2}{u} \right)\, u^{6}\, tr(t_1\, \sigma_1)
\end{array}
\]

\noindent We evaluate $tr(t_1\, \sigma_1)$:
\[
\begin{array}{lclcl}
tr(t_1\, \sigma_1) & = & (u-u^{-1})\, tr(t_1)\, +\, tr(\sigma_1\, t) & = & (u-u^{-1})^2\, tr(t\, \sigma_1)\, +\, (u-u^{-1})\, tr(t)\, +\, z\, s_1\ =\\
&&&&\\
&&& = & (u^2-1+u^{-2})\, z\, s_1\ +\ (u-u^{-1})\, s_1
\end{array}
\]

\noindent Substituting $tr(t_1\, \sigma_1)$ in the equation above we obtain the following:

\begin{equation}\label{tor}
V_{\hat{t}}\, =\, V_{\widehat{t_1\, \sigma_1}}\ \Leftrightarrow\ (1-u^6)\, (1-u^2)\, s_1 \ =\  0.
\end{equation}

From Equation~\ref{tor} and Theorem~\ref{thm1} we have that $s_1=tr(t)$ generates the torsion part of KBSM($S^1 \times S^2$). The free part of KBSM($S^1 \times S^2$) is generated by the unknot (or the empty knot).
\end{proof}

Although we don't obtain a closed formula for the torsion part of KBSM($S^1 \times S^2$), Theorem~\ref{mthm} implies the following:

\begin{cor}
If $\pi$ denotes the natural projection 
$$\pi: {\rm KBSM}\left(S^1 \times S^2\right)\, \rightarrow\, {\rm KBSM}\left(S^1 \times S^2\right)/Tor,$$ 
\noindent then:
\[
\pi(t^{2n+1})\, =\, 0,\, \forall\, n\in \mathbb{N} \quad {\rm and} \quad {\rm KBSM}\left(S^1 \times S^2\right)/Tor\, =\, \mathbb{Z}[A^{\pm 1}].
\]
\end{cor}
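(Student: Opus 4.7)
The plan is to combine two results already established in the excerpt: the parity-based reduction formulas derived in the proof of Theorem~\ref{thm1}, and the torsion relation~(\ref{tor}) appearing in the proof of Theorem~\ref{mthm}. Together these give a short derivation of both assertions.

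First I would recall from the proof of Theorem~\ref{thm1} that applying a negative braid band move to $t^n$, expanding via Lemma~\ref{l1} and Corollary~\ref{cor1}, and then cascading through the resulting triangular system produced the parity-ordered reductions
\[
s_{2k}\ \sim\ a\cdot s_0 \qquad \text{and}\qquad s_{2k+1}\ \sim\ b\cdot s_1,
\]
for certain coefficients $a,b\in\mathbb{Z}[A^{\pm 1}]$. Lifting these trace identities back to the skein module, they imply that in $B^{-}_{S^1\times S^2}$ (and hence in the further quotient ${\rm KBSM}(S^1\times S^2)$) every $t^{2k}$ is a scalar multiple of the unknot $t^0$, and every $t^{2k+1}$ is a scalar multiple of $t$.

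Next, I would invoke Equation~(\ref{tor}) from the proof of Theorem~\ref{mthm}, namely $(1-u^6)(1-u^2)\,s_1=0$, which shows that $t$ is annihilated by a nonzero element of the coefficient ring, so its class lies in the torsion submodule $Tor\subseteq {\rm KBSM}(S^1\times S^2)$. Combined with the odd reduction $t^{2n+1}\sim b\cdot t$ from the previous step, this forces $t^{2n+1}\in Tor$, whence $\pi(t^{2n+1})=0$ for every $n\in\mathbb{N}$.

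For the second assertion, the even reduction $t^{2k}\sim a\cdot t^0$ shows that after applying $\pi$ every class in the quotient is a scalar multiple of $\pi(t^0)$, so ${\rm KBSM}(S^1\times S^2)/Tor$ is cyclic, generated by the unknot. By Theorem~\ref{mthm} the free part of ${\rm KBSM}(S^1\times S^2)$ is freely generated by the unknot, so this cyclic generator is torsion-free, and the quotient is isomorphic to $\mathbb{Z}[A^{\pm 1}]$. The only subtlety I anticipate is bookkeeping the passage from the reductions proved modulo $bbm_{-}$ alone down to ${\rm KBSM}(S^1\times S^2)$, but since the latter is by construction a quotient of $B^{-}_{S^1\times S^2}$, every relation in the intermediate module holds a fortiori in the final module, and no additional work is required.
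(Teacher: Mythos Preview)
Your proposal is correct and matches the paper's approach: the paper states this corollary without proof, simply noting that ``Theorem~\ref{mthm} implies the following,'' and your argument spells out exactly the intended derivation---combining the parity reductions $s_{2k+1}\sim b\cdot s_1$ and $s_{2k}\sim a\cdot s_0$ from the proof of Theorem~\ref{thm1} with the torsion relation~(\ref{tor}) for $t$ from the proof of Theorem~\ref{mthm}. One small point: when you conclude that the cyclic generator $\pi(t^0)$ is torsion-free, the justification is not the bare statement of Theorem~\ref{mthm} (which only says the free part is \emph{generated} by the unknot) but rather the explicit calculation in its proof showing that both braid band moves applied to $t^0$ yield the trivial relation $1=1$, so no nontrivial annihilator is imposed on the unknot.
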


\section{A diagrammatic approach via braids}\label{kbsmunbr}

In this section we present the diagrammatic approach for computing KBSM($S^1\times S^2$) via braids. Note that during the process that we will describe below, the braids ``lose'' their natural top-to-bottom orientation, giving rise to what we call {\it unoriented braids}. Unoriented braids were defined in \cite{D0} as standard braids by ignoring the natural top-to-bottom orientation (for an illustration see Figure~\ref{unbr1}). These tools seem promising in computing Kauffman bracket skein modules of arbitrary $3$-manifolds (see for example \cite{D2, D4}).

\begin{figure}[H]
\begin{center}
\includegraphics[width=3.7in]{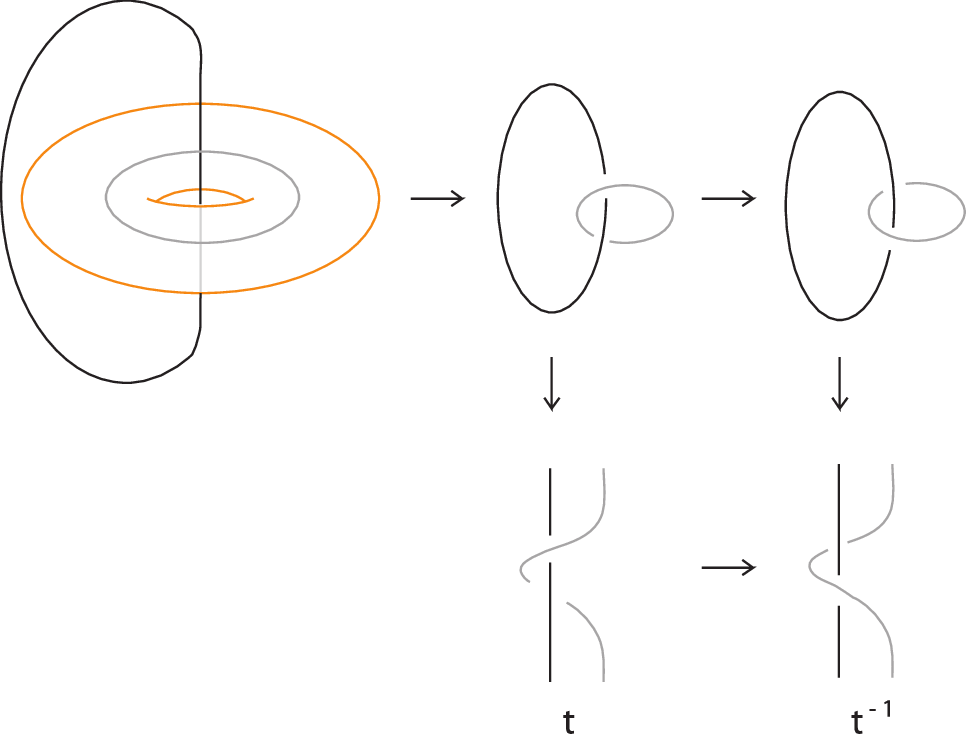}
\end{center}
\caption{Unoriented braids.}
\label{unbr1}
\end{figure}

In \cite{DL1} it is shown that in order to describe isotopy for knots and links in a c.c.o. $3$-manifold, it suffices to consider only one type of band moves (recall the discussion in \S~\ref{basics2}). In the algebraic approach we only considered the type $\beta$-band moves, that correspond to the braid band moves, and in the diagrammatic approach we will be using the $\alpha$-type band moves (recall Figure~\ref{bmov}). Our starting point again is the following:
\begin{equation}\label{infsys2}
{\rm KBSM}(S^1\times S^2)\, =\, {\rm KBSM(ST)}\ /\, <\, \alpha-{\rm type\ band\ moves}\, >\ =\ B_{\rm ST}\ /\, <\, \alpha-{\rm type\ band\ moves}\, >
\end{equation}
\noindent, that is, we study the effect of the $\alpha$-type band moves on elements in the $B_{\rm ST}$ basis of KBSM(ST). We denote the effect of $\alpha$-type band moves on $t^n\in B_{\rm ST}$ as $bm(t^n)=A^{6}x_n$, where $x_n$ is illustrated in Figure~\ref{xn1}. Obviously, $x_1\, =\, \hat{t}$.

\begin{figure}[H]
\begin{center}
\includegraphics[width=4.2in]{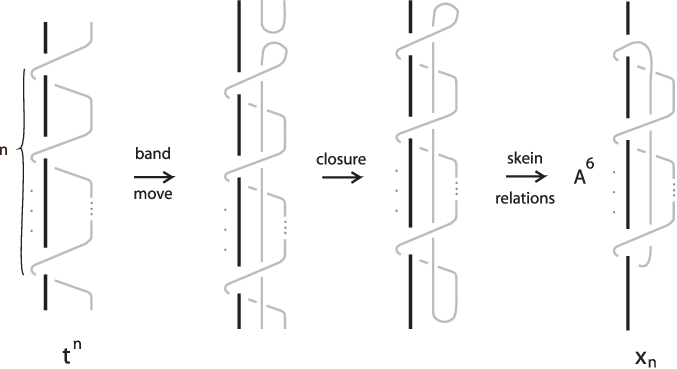}
\end{center}
\caption{Performing band moves on elements in $B_{{\rm ST}}$.}
\label{xn1}
\end{figure}

We now return to the infinite system of equations (\ref{infsys2}) and we have the following:

\begin{equation}\label{trsn}
t\ \overset{{\rm band}}{\underset{{\rm move}}{\longrightarrow}}\ A^6\, x_1\ \Leftrightarrow\ (1-A^6)\, t\ =\ 0.
\end{equation}

\noindent Hence, $t$ produces torsion in KBSM($S^1\times S^2$).

\begin{nt}\rm
We denote by $\hat{t}$ the closure of the looping generator $t$ and by $\widehat{t^m}$ the closure of $m$-looping generators in ST. Moreover,  $x_n\, \widehat{t^m}$, $n, m\in \mathbb{N}$, denotes $x_n$ followed by $m$ copies of $\hat{t}$. For an illustration see Figure~\ref{znt1}.
\end{nt}

\begin{figure}[H]
\begin{center}
\includegraphics[width=1.2in]{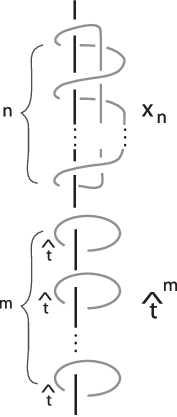}
\end{center}
\caption{The elements $x_n\, \widehat{t^m}$ for $n, m\in \mathbb{N}$.}
\label{znt1}
\end{figure}

In order to clarify the steps needed toward the computation of KBSM(ST) using the diagrammatic method via braids, we also demonstrate how we obtain the equation for $t^2$. After the performance of the band move on $t^2$, we apply the Kauffman bracket skein relation on the resulting $A^{6}x_2$ and we write $x_2$ in terms of $\widehat{t}$'s. Applying the skein relation then again to the $\widehat{t}$'s, we express them in terms of the $t_i$'s, obtaining the equation of the infinite system. For an illustration see Figure~\ref{x22}. Indeed, we have that:

\[
\begin{array}{rcccl}
t^2 & \rightarrow & bm(t^2)\, =\, A^{6}\, x_2 & \widehat{\cong} & -A^{10}\, \widehat{t}^2\, -A^{8}\\
&&&&\\
{\widehat{t}}^2 & \overset{\rm braiding}{\rightarrow} & tt_1^{\prime} & \widehat{\cong} & -A^{-2}\, t^2\, -\, A^{2}
\end{array}
\]

Hence, we obtain the following equation:

\[
(1-A^8)\, t^2\ =\ -A^8\, (1-A^4).
\]

\begin{figure}[H]
\begin{center}
\includegraphics[width=6.2in]{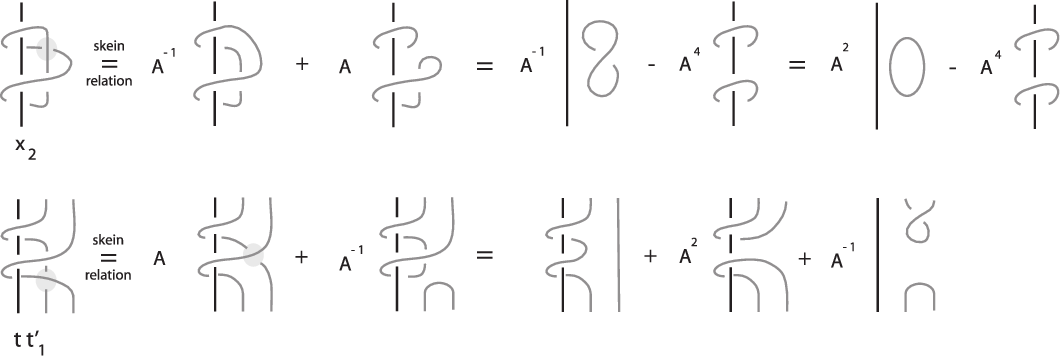}
\end{center}
\caption{Expressing $x_2$ in terms of elements in $B_{{\rm ST}}$.}
\label{x22}
\end{figure}

We observe that in order to obtain the equations of the infinite system, a recursive formula for $x_n$ is needed. We have the following:

\begin{lemma}\label{l3}
For $n\in\mathbb{N}$ such that $n\geq 2$, the following relations hold in KBSM($S^1\times S^2$):
\[
x_n\ \sim\ -A^8\, x_{n-2}\ -\ A^{4}\, x_{n-1}\, \widehat{t}.
\]
\end{lemma}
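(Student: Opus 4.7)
The plan is to establish the recursion by applying the Kauffman bracket skein relation at a single carefully chosen crossing inside the diagram $x_n$ depicted in Figure~\ref{xn1}. By construction, $x_n$ arises from a band move on $t^n \in B_{\rm ST}$, and the band-moved strand together with the outermost looping generator of the remaining $n-1$ loops form a natural crossing. Taking this crossing as the site of application of the Kauffman relation
\[
L_+ \;=\; A\, L_0 \;+\; A^{-1}\, L_\infty
\]
is the technical engine of the proof.

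First I would carry out the resolution and isotope each of the two resulting smoothings into a form recognizable from the family $\{x_k\,\widehat{t^m}\}$. The expectation is that the $L_0$-smoothing disconnects the outermost looping generator as an independent $\hat{t}$, while what remains isotopes (after flattening) to the diagram $x_{n-1}$, contributing a scalar multiple of $x_{n-1}\hat{t}$. Dually, the $L_\infty$-smoothing should reconnect the outermost loop with the next inner loop, producing a configuration isotopic — after the extra twists introduced by the smoothing are straightened — to $x_{n-2}$.

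The main obstacle is the scalar bookkeeping. One must combine the $A$ (resp.\ $A^{-1}$) coming from the Kauffman relation with the framing corrections of the form $-A^{\pm 3}$ contributed by each kink that must be unwound when the smoothed diagrams are pushed back into their standard $x_{n-1}$ or $x_{n-2}$ shape; isolated trivial circles, should any arise, are absorbed via $L\sqcup \mathrm{O} = (-A^2-A^{-2})L$. The claim is that, after this accounting, the $L_\infty$-branch yields exactly $-A^8\, x_{n-2}$ and the $L_0$-branch exactly $-A^4\, x_{n-1}\hat{t}$.

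As a built-in sanity check on the scalars, I would match the recursion in the case $n=2$ against the direct evaluation already worked out in the text. Since $x_1=\hat{t}$, the formula predicts
\[
x_2 \;=\; -A^8\, x_0 \;-\; A^4\, \hat{t}^{\,2},
\]
and this must be consistent with the earlier computation $A^6 x_2 \widehat{\cong} -A^{10}\hat{t}^{\,2} - A^8$, i.e.\ $x_2 = -A^4\hat{t}^{\,2} - A^2$. Agreement of the $x_{n-1}\hat{t}$ coefficient is immediate, and the $x_{n-2}$ term fixes the reading of $x_0$ as a scalar. Once the framing contributions are verified by this base comparison, the same resolution at the analogous crossing in $x_n$ for arbitrary $n \geq 2$ delivers the stated identity.
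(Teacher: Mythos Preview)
Your proposal is correct and follows essentially the same route as the paper: the paper's proof of this lemma is purely diagrammatic (it is given entirely by Figure~\ref{xnfor1}), and what you describe in words---resolving a crossing of the band-moved strand against the outermost loop, then absorbing the resulting kinks via $-A^{\pm 3}$ corrections to land on $-A^{4}x_{n-1}\widehat{t}$ and $-A^{8}x_{n-2}$---is exactly the content of that figure. Your $n=2$ consistency check with $x_2=-A^{4}\widehat{t}^{\,2}-A^{2}$ is a good way to pin down the scalars that the picture leaves implicit.
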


\begin{figure}[H]
\begin{center}
\includegraphics[width=5.9in]{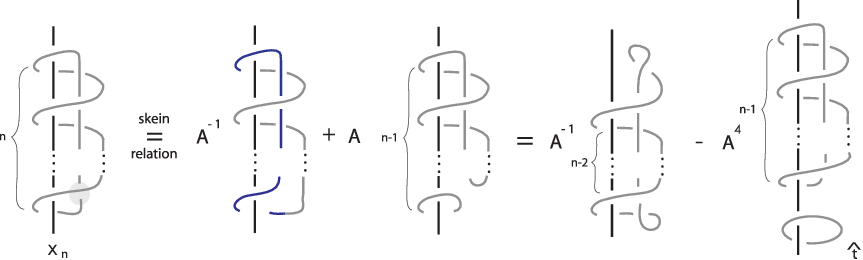}
\end{center}
\caption{The proof of Lemma~\ref{l3}.}
\label{xnfor1}
\end{figure}

Note now that $\widehat{t}$ won't affect the process illustrated in Figure~\ref{xnfor1} for $x_{n-1}\, \widehat{t}$. Indeed, we have that
\[
x_{n-1}\, \widehat{t}\, \sim\, -A^{8}\, x_{n-3}\widehat{t}\ -\ A^{4}\, x_{n-2}\, \left(\, \widehat{t}\, \right)^2.
\] 

Moreover, for $n=2$ we have that $x_2\, \sim\, -A^4\, \widehat{t}^2\, -A^2$.

\smallbreak

We now present an ordering relation for elements in the set $D\, :=\, \{x_n\, \widehat{t}\, ^m \}_{n, m\in \mathbb{N}}$.

\begin{defn}\rm \label{ordef}
Let $\alpha\, =\, x_n\, \widehat{t}\, ^m$ and $\beta\, =\,  x_k\, \widehat{t}\, ^l$. Then:
\smallbreak
\begin{itemize}
\item[i.] If $n+m<k+l$, then $\alpha<\beta$.
\smallbreak
\item[ii.] If $n+m=k+l$, then:
\begin{itemize}
\item[a.] If $n<k$, then $\alpha<\beta$.
\smallbreak
\item[b.] If $n=k$, then $\alpha=\beta$.
\end{itemize}
\end{itemize}
\end{defn}

\begin{prop}
The set $D\, :=\, \{x_n\, \widehat{t}\, ^m \}_{n, m\in \mathbb{N}}$ equipped with the ordering relation given in Definition~\ref{ordef} is a totally ordered and a well-ordered set.
\end{prop}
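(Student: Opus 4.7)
The plan is to reduce both claims to standard facts about the lexicographic order on $\mathbb{N}\times\mathbb{N}$ via an explicit order-embedding. Define
\[
\phi\colon D \longrightarrow \mathbb{N}\times\mathbb{N}, \qquad \phi\bigl(x_n\,\widehat{t}^{\,m}\bigr) \;=\; (n+m,\; n),
\]
and equip the target with the lexicographic order. The map $\phi$ is injective, since from $(n+m,n)$ one recovers $n$ and then $m$; and by construction it is order-preserving, because clause (i) of Definition~\ref{ordef} is exactly the comparison of first coordinates, while clause (ii.a) is the refinement by second coordinate when the first coordinates coincide, and clause (ii.b) forces equality of both coordinates.

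For the total-order claim I would verify trichotomy directly from Definition~\ref{ordef}, reading in the implicit symmetric ``$>$'' counterparts of clauses (i) and (ii.a). Given $\alpha = x_n\widehat{t}^{\,m}$ and $\beta = x_k\widehat{t}^{\,l}$, trichotomy in $\mathbb{N}$ applied to the sums $n+m,\, k+l$ either decides the comparison via clause (i), or (when the sums agree) reduces to trichotomy applied to $n,\, k$; in the subcase $n=k$ the equal sums force $m=l$, giving $\alpha=\beta$. Transitivity and antisymmetry then follow from the corresponding properties of the lex order through $\phi$.

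For well-ordering, I would use the standard fact that $(\mathbb{N}^2,<_{\mathrm{lex}})$ is a well-order (since $\mathbb{N}$ is), whence any subset of it is well-ordered, and the image $\phi(D)\subseteq \mathbb{N}^2$ in particular. Concretely, given $\emptyset\neq S\subseteq D$, first set
\[
s_0 \;=\; \min\bigl\{\,n+m \;:\; x_n\widehat{t}^{\,m}\in S\,\bigr\},
\]
which exists by well-ordering of $\mathbb{N}$; then set
\[
n_0 \;=\; \min\bigl\{\,n \;:\; x_n\widehat{t}^{\,m}\in S,\ n+m = s_0\,\bigr\},
\]
which again exists. The element $x_{n_0}\widehat{t}^{\,s_0-n_0}$ is then the minimum of $S$ in the ordering of Definition~\ref{ordef}.

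There is no genuine obstacle here; the proof is essentially formal once the embedding $\phi$ is written down. The only minor care needed is to make explicit that Definition~\ref{ordef} implicitly includes the symmetric ``$>$'' cases of (i) and (ii.a), so that trichotomy is actually covered; this should be stated in one line to avoid ambiguity before invoking the well-ordering of the lex order on $\mathbb{N}^2$.
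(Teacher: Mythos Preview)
Your proof is correct and takes a genuinely different route from the paper. The paper verifies transitivity directly by a case analysis on the clauses of Definition~\ref{ordef}, asserts that antisymmetry is similar and that totality is immediate because all cases are covered, and then argues well-ordering by exhibiting the global minimum $x_0\widehat{t}^{\,0}$. Your approach instead packages the entire order as the pullback of the lexicographic order on $\mathbb{N}^2$ along the explicit injection $\phi(x_n\widehat{t}^{\,m})=(n+m,n)$, and then appeals to standard properties of that order.

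What this buys you: the transitivity and antisymmetry arguments collapse to one line once $\phi$ is seen to be an order-embedding, and your well-ordering argument is actually more complete than the paper's. Exhibiting a single least element of $D$, as the paper does, does not by itself establish that \emph{every} nonempty subset has a least element; your two-step minimisation (first over $n+m$, then over $n$) supplies exactly what is needed. The paper's approach, on the other hand, is more self-contained in that it does not invoke any outside fact about $(\mathbb{N}^2,<_{\mathrm{lex}})$. Your remark that the symmetric ``$>$'' cases of clauses (i) and (ii.a) must be read into the definition is well taken and worth keeping.
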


\begin{proof}
In order to show that the set $D$ is a totally ordered set when equipped with the ordering given in Definition~\ref{ordef}, we need to show that the ordering relation is antisymmetric, transitive and total. We only show that the ordering relation is transitive. Antisymmetric property follows similarly and totality follows from Definition~\ref{ordef}, since all possible cases have been considered.

\smallbreak

Let $\alpha\, =\, x_n\, \widehat{t}\, ^m,\ \beta\, =\,  x_k\, \widehat{t}\, ^l,\ \gamma\, =\,  x_p\, \widehat{t}\, ^q$, such that $\alpha<\beta$ and $\beta<\gamma$. We prove that $\alpha<\gamma$. Since $\beta<\gamma$, we have that:
\smallbreak
\begin{itemize}
\item[(a.)] either $n+m<k+l$, and since $\beta<\gamma$, we have that $k+l\leq p+q\, \Rightarrow\, n+m<p+q\, \Rightarrow\, \alpha<\gamma$,
\smallbreak
\item[(b.)] or $n+m=k+l$ such that $n<k$. Then, since $\beta<\gamma$, we have that:
\begin{itemize}
\item[(i.)] either $k+l<p+q$, same as in case (a.),
\smallbreak
\item[(ii.)] or $k+l=p+q$ such that $k<p$. Then, $n<k<p\, \Rightarrow\, \alpha<\gamma$.
\end{itemize}
\end{itemize}

We conclude that the ordering relation is transitive. The minimum element in $D$ is $x_0\widehat{t}\, ^0$, i.e. the unknot, and hence $D$ is a well-ordered set.
\end{proof}

The next step is to convert the $x_n$'s to elements in the $B_{\rm ST}$ basis of KBSM(ST) for all $n\in \mathbb{N}$. For that, we need the following lemmas:

\begin{lemma}\label{lemxn}
For $n\in \mathbb{N}$ such that $n\geq 2$, the following relations hold in KBSM($S^1 \times S^2$):
\begin{equation}\label{eqx}
x_n\, =\, (-1)^{n-1}\, A^{4n-4}\, \widehat{t}\, ^n\, +\, \begin{cases} \underset{i=0}{\overset{\frac{n-2}{2}}{\sum}}\, a_i\, \widehat{t}\, ^{2i} &,\ {\rm for}\ n\ {\rm even}\\ &\\ \underset{i=0}{\overset{\lfloor \frac{n-2}{2} \rfloor}{\sum}}\, b_i\, \widehat{t}\, ^{2i+1} &,\ {\rm for}\ n\ {\rm odd} \end{cases} 
\end{equation}
\end{lemma}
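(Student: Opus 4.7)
The plan is to prove the lemma by strong induction on $n$, using the recursive relation established in Lemma~\ref{l3}, namely $x_n \sim -A^8 x_{n-2} - A^4 x_{n-1}\widehat{t}$, together with the explicit computation $x_2 \sim -A^4 \widehat{t}^{\,2} - A^2$ noted just after the statement of that lemma. The point is that only the leading term needs to be tracked precisely; all other summands are absorbed into the two parity-indexed sums on the right-hand side of \eqref{eqx}.

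For the base case, $n=2$ gives directly $x_2 = (-1)^{1}A^{4}\widehat{t}^{\,2} + a_0\widehat{t}^{\,0}$ with $a_0 = -A^2$, which matches the even case of \eqref{eqx}. For $n=3$, substituting the recursion yields $x_3 \sim -A^8 x_1 - A^4 x_2 \widehat{t} = A^{8}\widehat{t}^{\,3} + (A^{6}-A^{8})\widehat{t}$, which is of the claimed odd form $(-1)^{2}A^{8}\widehat{t}^{\,3} + b_0\widehat{t}$.

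For the inductive step, assume the formula holds for $x_{n-2}$ and $x_{n-1}$ and apply Lemma~\ref{l3}. The leading term of $-A^4 x_{n-1}\widehat{t}$ equals
\[
-A^4\cdot(-1)^{n-2}A^{4(n-1)-4}\widehat{t}^{\,n}\ =\ (-1)^{n-1}A^{4n-4}\widehat{t}^{\,n},
\]
which is precisely the leading term in \eqref{eqx}. The leading term of $-A^8 x_{n-2}$, on the other hand, contributes at most $\widehat{t}^{\,n-2}$, and hence is of lower order. It remains to check the parity of the lower order terms produced: by the induction hypothesis, the non-leading terms of $x_{n-1}$ have exponents congruent to $n-1$ modulo $2$, so that multiplying by $\widehat{t}$ produces exponents of parity $n$, and the non-leading terms of $x_{n-2}$ already have exponents of parity $n-2 \equiv n \pmod 2$. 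Thus both summands on the right of the recursion contribute only to the sum indexed by the correct parity in \eqref{eqx}, completing the induction.

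The main (mild) obstacle is bookkeeping: one has to track that the leading coefficient produced by the recursion is exactly $(-1)^{n-1}A^{4n-4}$ and not a competing term of the same total degree coming from $-A^8 x_{n-2}$. This is handled by the degree count above, which shows the $x_{n-2}$ contribution sits at strictly lower $\widehat{t}$-exponent. Since the coefficients $a_i, b_i$ in \eqref{eqx} are left unspecified, no further computation is required beyond the leading-term verification.
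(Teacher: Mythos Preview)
Your proof is correct and follows essentially the same approach as the paper: strong induction on $n$ using the recursion $x_n \sim -A^8 x_{n-2} - A^4 x_{n-1}\widehat{t}$ from Lemma~\ref{l3}, with the explicit case $x_2 \sim -A^4\widehat{t}^{\,2} - A^2$ as base. Your write-up is in fact a bit more careful than the paper's, since you explicitly verify that the lower-order terms inherit the correct parity in $\widehat{t}$-exponent, a point the paper leaves implicit under the label ``l.o.t.''
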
 

\begin{proof}
We prove Lemma~\ref{lemxn} by strong induction on $n$. The case $n=2$ is the base of induction and we have that $x_2\, \widehat{\cong}\, -A^{4}\, \widehat{t}\, ^2\, -A^{2}$. Hence, relation~(\ref{eqx}) holds for $n=2$. Assume that relations~(\ref{eqx}) hold for all $n\in \mathbb{N},\, n>2$ up to $k\in \mathbb{N}$. Then, for $k+1$ we have:
\[
\begin{array}{lclc}
x_{k+1} & \overset{L.~\ref{l3}}{=} & -A^{8}\, \underline{x_{k-1}}\, -\, A^{4}\, \underline{x_{k}}\, \widehat{t} & \overset{ind.}{\underset{step}{=}}\\ 
&&&\\
& = & -A^{8}\, \left((-1)^{k-1}\, A^{4(k-1)-4}\, \widehat{t}\, ^{k-1}\, +\, \sum\, {\rm l.o.t.} \right) & +\\
&&&\\
& + & -A^{4}\, \left((-1)^{k-1}\, A^{-4k+4}\, \widehat{t}\, ^{k+1}\, +\, \sum\, {\rm l.o.t.} \right) & =\\
&&&\\
& = & (-1)^{k}\, A^{-4k}\, \widehat{t}\, ^{k-1}\, +\, (-1)^{k+2}\, A^{-4k}\, \widehat{t}\, ^{k+1}\, +\, \sum\, {\rm l.o.t.} & =\\
&&&\\
& = & (-1)^{k+2}\, A^{-4(k+1)+4}\, \widehat{t}\, ^{k+1}\, +\, \sum\, {\rm l.o.t.}\\ 
\end{array}
\]
\end{proof}

Note that $\widehat{t^{k}}$ corresponds to the monomial $tt_1^{\prime} \ldots t_{k-1}^{\prime}\, =\, \tau^{\prime}_{0, k-1}$, namely, a monomial in the $\mathcal{B}^{\prime}_{{\rm ST}}$ basis of KBSM(ST). We now convert these monomials to elements in the $B_{{\rm ST}}$ basis. We also note that this is done in \cite[\S~3]{D0} via the ordering relation of Definition~\ref{order}.

\begin{lemma}\label{tis}
The following relations hold in KBSM(ST), and hence in KBSM($S^1 \times S^2$), for $k, m\in \mathbb{N}$ such that $k, m\geq 2$:
\[
\beta\, {t_i^{\prime}}^{k}\, {t_{i+1}^{\prime}}^{m}\, \widehat{\cong}\, -A^{-2}\, \beta\, {t_{i}^{\prime}}^{k+m}\, -\, A^{3}\, \beta\, {t_i^{\prime}}^{k+m-2}\, -\, A\, \beta\, {t_i^{\prime}}^{k-1}\, {t_{i+1}^{\prime}}^{m-1}
\]
\noindent where $\beta$ is a monomial of $t_i^{\prime}$'s with index at most $i-1$.
\end{lemma}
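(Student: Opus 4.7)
The plan is to reduce this lemma to a local application of the Kauffman bracket skein relation, extending the argument of Lemma~\ref{l2}. Observe first that the statement is formally identical to Lemma~\ref{l2}, except that the pair of looping generators has been shifted from indices $(0,1)$ to indices $(i, i+1)$, and multiplied on the left by $\beta$. The hypothesis that $\beta$ consists of $t_j^{\prime}$'s with $j \leq i-1$ is the crucial one: diagrammatically, each such $t_j^{\prime}$ lives on strands $1, \ldots, i$ and wraps around $\widehat{I}$ without entering the region occupied by strands $i+1$ and $i+2$. Therefore $\beta$ sits as an inert prefix while all skein manipulations take place higher up on the braid.

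The key step is to apply the Kauffman bracket skein relation to the same distinguished crossing as in the proof of Lemma~\ref{l2} (cf.\ Figure~\ref{lem1}), but translated up by $i$ strands. Spelling this out: ${t_i^{\prime}}^k {t_{i+1}^{\prime}}^m$ is represented diagrammatically as $k$ loops of strand $i+1$ around $\widehat{I}$ followed by $m$ loops of strand $i+2$ around $\widehat{I}$; at the junction there is a crossing between the outgoing $t_i^{\prime}$ loop and the incoming $t_{i+1}^{\prime}$ loop to which the skein relation is applied. The $A$--smoothing merges the adjacent loops into ${t_i^{\prime}}^{k+m}$ at a cost of a framing loop (giving the first two terms, with coefficients $-A^{-2}$ and $-A^3$ arising from the combined skein expansion of the resulting Reidemeister~II / loop configuration), while the $A^{-1}$--smoothing leaves the bottom $k-1$ loops of $t_i^{\prime}$ together with the top $m-1$ loops of $t_{i+1}^{\prime}$, contributing the last term $-A \beta {t_i^{\prime}}^{k-1} {t_{i+1}^{\prime}}^{m-1}$. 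The closure notation $\widehat{\cong}$ permits conjugation so that the local simplification may be performed in isolation.

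Because the whole manipulation is confined to strands $i+1, i+2$, the prefix $\beta$ is left untouched at every stage and simply multiplies each of the three resulting terms, reproducing the claimed identity. The main obstacle, which is purely bookkeeping, is to check the framing factors and signs match those of Lemma~\ref{l2}; since both lemmas arise from the identical local diagrammatic move, the coefficients $-A^{-2}, -A^{3}, -A$ are transported verbatim. Finally, the identity holds in KBSM(ST) and hence also in KBSM($S^1\times S^2$) since the latter is a quotient of the former under the band-move relations, which do not affect this purely skein-theoretic computation.
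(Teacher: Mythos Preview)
Your proposal is correct and follows essentially the same approach as the paper: the paper's proof simply states that Lemma~\ref{tis} is an immediate consequence of Lemma~\ref{l2} (and Figure~\ref{lem1}), and your argument spells out precisely why---the skein computation is local to strands $i+1, i+2$, the prefix $\beta$ is inert since it lives on lower-indexed strands, and hence the coefficients $-A^{-2}, -A^{3}, -A$ transfer verbatim from the $(0,1)$ case. Your additional remark that the identity passes to KBSM($S^1\times S^2$) as a quotient of KBSM(ST) is also exactly what the paper asserts in the lemma's statement.
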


\begin{proof}
The proof of Lemma~\ref{tis} is an immediate result of Lemma~\ref{l2} (recall also Figure~\ref{lem1}).
\end{proof}

Lemma~\ref{tis} implies the following:

\begin{prop}\label{prop2}
For $n\in \mathbb{N}$, such that $n>0$, the following relations hold:
\[
\widehat{t}\, ^n\ \widehat{\underset{{\rm skein}}{\cong}}\ \left( -A^{-2} \right)^{n}\, t^{n+1}\, +\, \underset{i=0}{\overset{n}{\sum}}\, a_i\, t^{i},
\]
\noindent where the $a_i$'s are coefficients.
\end{prop}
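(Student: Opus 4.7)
The plan is to argue by strong induction on $n$, converting $\widehat{t}^{\,n}$ from its natural presentation in the basis $\mathcal{B}'_{\mathrm{ST}}$ of KBSM(ST) into a combination of elements of the basis $B_{\mathrm{ST}}=\{t^{i}\}_{i\in\mathbb{N}}$. The starting observation is that $\widehat{t}^{\,n}$ is represented by a mixed braid monomial of the form $t\,t_{1}'\,t_{2}'\cdots$, which is a monomial in $\Sigma'$ at the bottom of the ordering of Definition~\ref{order} within its index-sum class; the task is therefore exactly the kind of basis-conversion problem that Lemma~\ref{l2} (equivalently Lemma~\ref{tis}) was designed for.

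The main tool is Lemma~\ref{tis}: merging an adjacent pair $t_{i}'^{k}\,t_{i+1}'^{m}$ produces the leading piece $-A^{-2}\,t_{i}'^{k+m}$ together with strictly lower-order correction terms in the ordering of Definition~\ref{order}. In the inductive step, I would apply this at the rightmost pair of looping generators of $\widehat{t}^{\,n}$, collapsing $t_{n-2}'\,t_{n-1}'$ into $-A^{-2}\,(t_{n-2}')^{2}$ plus corrections. Iterating on the leading piece absorbs the looping generators one at a time onto the first strand, contributing one factor of $-A^{-2}$ per merge and ultimately producing the claimed leading coefficient $(-A^{-2})^{n}$ in front of $t^{\,n+1}$. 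For small $n$ the base of the induction is verified directly from Lemma~\ref{l2}, and the computation already displayed for $\widehat{t}^{\,2}\widehat{\cong}-A^{-2}t^{2}-A^{2}$ gives a concrete anchor.

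Each correction term produced along the way has strictly smaller index-sum or strictly lower order than $\widehat{t}^{\,n}$, and consists of a monomial in $\Sigma'$ with fewer (or lower-indexed) looping generators. By the strong induction hypothesis, each such correction expands as a $\mathbb{Z}[A^{\pm 1}]$-combination of $t^{i}$ with $i\le n$, and collecting them yields the sum $\sum_{i=0}^{n}a_{i}t^{i}$; the coefficients $a_{i}$ need never be computed explicitly. The well-ordering of $\Sigma'$ in Definition~\ref{order} guarantees that the recursion terminates.

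The main obstacle I expect is bookkeeping: Lemma~\ref{l2} only applies when the exponents satisfy $k\ge m$, whereas iterating the merging procedure naturally produces the reversed configuration $t^{1}\,(t_{1}')^{2}$ (and more generally $(t_{i}')^{k}\,(t_{i+1}')^{m}$ with $k<m$). The fix is the conjugation move of Remark~\ref{rem1}: closing up lets us swap the exponents of the conjugate looping generators, returning the monomial to a form in which Lemma~\ref{l2} applies. I would need to check carefully that this swap respects the ordering of Definition~\ref{order}, so that what were lower-order corrections before the swap remain strictly below the new leading term afterward; once this order control is in place, the induction closes and the leading coefficient $(-A^{-2})^{n}$ is read off from the $n$ merging steps.
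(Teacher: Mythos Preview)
Your proposal is correct and follows essentially the same route as the paper: iterate Lemma~\ref{tis} (equivalently Lemma~\ref{l2}) on adjacent pairs of looping generators in $\widehat{t}^{\,n}=t\,t_1'\cdots t_{n-1}'$, picking up one factor of $-A^{-2}$ on the leading term at each merge, with the lower-order corrections absorbed by induction. The paper's argument is simply a terse one-line version of this and does not spell out the conjugation swap or the well-ordering termination you describe; note only that the paper counts $n-1$ merges rather than $n$, consistently with there being $n$ looping generators in $\widehat{t}^{\,n}$.
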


\begin{proof}
Consider the monomial $\widehat{t}\, ^n\, :=\, tt^{\prime}_1\ldots t^{\prime}_{n-1}$ and apply Lemma~\ref{tis} on each pair of loop generators with consecutive indices. Each time Lemma~\ref{tis} is applied, the coefficient $-A^{-2}$ will appear on the highest order term of the resulting sum. In total, there are $n-1$ pairs of $t_i^{\prime}$'s where Lemma~\ref{tis} is to applied. The result follows.
\end{proof}

We are now ready to present the main theorem of this section.

\begin{thm}\label{mth}
The following relations hold in KBSM($S^1 \times S^2$):
\[
\left(1-A^{2n+4} \right)\, t^n\, =\, \begin{cases} \underset{i=0}{\overset{(n-2)/2}{\sum}}\, a_i\, t^{2i} &,\ {\rm for}\ n\ {\rm even}\\ &\\ \underset{i=0}{\overset{(n-3)/2}{\sum}}\, b_i\, t^{2i+1} &,\ {\rm for}\ n\ {\rm odd} \end{cases}
\]
\end{thm}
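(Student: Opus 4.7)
The plan is to chain three things: the band-move identity $t^n=A^{6}x_n$ (from Equation~(\ref{infsys2}) and the definition $bm(t^n)=A^{6}x_n$), the expansion of $x_n$ in $\widehat{t}\,^k$'s given by Lemma~\ref{lemxn}, and the reduction of each $\widehat{t}\,^k$ to the $B_{\rm ST}$ basis given by Proposition~\ref{prop2}. I then collect the coefficient of $t^n$ on the right-hand side and isolate it on the left.

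Substituting Lemma~\ref{lemxn} into $t^n=A^{6}x_n$ gives
$$t^n \;=\; (-1)^{n-1}A^{4n+2}\,\widehat{t}\,^n \;+\; A^{6}\Sigma_x,$$
where $\Sigma_x$ is a combination of $\widehat{t}\,^k$'s with $k<n$ and $k\equiv n\pmod 2$. By Proposition~\ref{prop2}, the leading term of $\widehat{t}\,^n = tt_1^{\prime}\cdots t_{n-1}^{\prime}$ in the $B_{\rm ST}$ basis is $(-A^{-2})^{n-1}t^n$, coming from the $n-1$ iterated applications of Lemma~\ref{tis} each contributing one factor $-A^{-2}$ on the highest-order term. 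Hence the only way to produce $t^n$ on the right-hand side is through this leading contribution, with total coefficient
$$(-1)^{n-1}A^{4n+2}\cdot(-A^{-2})^{n-1} \;=\; A^{4n+2-2(n-1)} \;=\; A^{2n+4}.$$
Moving this to the left yields $(1-A^{2n+4})t^n=R$, where $R$ is a linear combination of $t^j$'s with $j<n$ coming from the subleading terms of Proposition~\ref{prop2} applied to $\widehat{t}\,^n$ and from the reductions of the $\widehat{t}\,^k$'s in $\Sigma_x$.

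It remains to show that every $t^j$ in $R$ satisfies $j\equiv n\pmod 2$. Inspecting Lemma~\ref{tis}, each application replaces a monomial with total loop-count $k+m$ by monomials with counts $k+m$, $k+m-2$, $k+m-2$, so the parity of the total loop count is preserved at every step. Starting from $\widehat{t}\,^k$ (which has $k$ loops) the reduction therefore produces only $t^j$'s with $j\equiv k\pmod 2$; combined with the parity constraint $k\equiv n\pmod 2$ on each $\widehat{t}\,^k$ appearing in $\Sigma_x$, we conclude that every term of $R$ is either $t^{2i}$ (for $n$ even, $2i<n$) or $t^{2i+1}$ (for $n$ odd, $2i+1<n$), which is exactly the statement.

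The main obstacle I expect is the bookkeeping: correctly cancelling the two $(-1)^{n-1}$ signs (one from Lemma~\ref{lemxn}, one hidden inside $(-A^{-2})^{n-1}$) and combining the $A$-exponents $A^{6}\cdot A^{4n-4}\cdot A^{-2(n-1)}$ to the clean $A^{2n+4}$; and making the parity-preservation argument uniform across every $\widehat{t}\,^k$ in $\Sigma_x$, not just the leading $\widehat{t}\,^n$. The actual values of the coefficients $a_i,b_i$ are irrelevant for the statement and so need not be computed.
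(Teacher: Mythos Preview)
Your argument follows exactly the paper's route: apply the band move $t^n=A^6x_n$, expand $x_n$ via Lemma~\ref{lemxn}, reduce $\widehat{t}\,^n$ via Proposition~\ref{prop2}, and collect the coefficient $A^{2n+4}$ of $t^n$ (you even correctly read past the off-by-one typo in the statement of Proposition~\ref{prop2}, using $(-A^{-2})^{n-1}t^n$ as its proof and the proof of Theorem~\ref{mth} do). Your explicit parity-preservation check via Lemma~\ref{tis} is a welcome addition that the paper leaves implicit under ``l.o.t.''
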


\begin{proof}
\[
\begin{array}{cclclc}
t^n & \overset{bm}{\rightarrow} & A^{6}\, \underline{x_n} & \overset{L.~\ref{lemxn}}{=} & A^{6}\, \left[(-1)^{n-1}\, A^{4n-4}\, \underline{\widehat{t}\, ^n}\, +\, \sum\, {\rm l.o.t.}\right] & {\widehat{\underset{{\rm Prop.~\ref{prop2}}}{\cong}}}\\
&&&&&\\
&&& \widehat{\underset{{\rm skein}}{\cong}} & (-1)^{n-1}\, A^{4n+2}\, \left( -A^{-2} \right)^{n-1}\, \left(t^{n}\, +\, \underset{i=0}{\overset{n-2}{\sum}}\, a_i\, t^{i} \right)\, +\, \sum\, {\rm l.o.t.} & =\\
&&&&&\\
&&& = & (-1)^{2n-2}\, A^{4n+2-2n+2}\, t^{n}\, +\, +\, \sum\, {\rm l.o.t.} & =\\
&&&&&\\
&&& = & A^{2n+4}\, t^{n}\, +\, +\, \sum\, {\rm l.o.t.}  &\\
\end{array}
\]
\noindent where ``l.o.t.'' stands for lower ordered terms than $t^n$.
\end{proof}

Considering now the result of Theorem~\ref{mth} on the infinite system of equations~(\ref{infsys2}), a solution of which corresponds to computing KBSM($S^1 \times S^2$), we have that the elements of the form $t^{2n+1}$ for $n\in \mathbb{N}$, correspond to the torsion part of KBSM($S^1\times S^2$), since these elements can be written as $a\cdot t$ and $t$ produces torsion in $S^1\times S^2$ (recall relation~(\ref{trsn})). Moreover, we have that the free part of KBSM($S^1\times S^2$) is generated by the unknot. In other words, we have shown that:

\begin{thm}
\[
{\rm KBSM}\left(S^1 \times S^2\right)\ =\ \mathbb{Z}[A^{\pm 1}]\, \oplus\, \underset{i=0}{\overset{\infty}{\bigoplus}}\, \frac{\mathbb{Z}[A^{\pm 1]}}{(1-A^{2i+4})}
\]
\end{thm}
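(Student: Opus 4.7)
The plan is to extract the stated direct-sum decomposition from the recursive equations $(1-A^{2n+4})t^n = \sum \text{l.o.t.}$ of Theorem~\ref{mth}, combined with the torsion equation $(1-A^6)t=0$ of Equation~(\ref{trsn}). The free summand $\mathbb{Z}[A^{\pm 1}]$ is immediate: at $n=0$ the infinite system~(\ref{infsys2}) imposes the trivial relation $1=1$, so the unknot generates a free copy of $\mathbb{Z}[A^{\pm 1}]$, and the already-established identity KBSM$(S^1\times S^2)/\text{Tor}=\mathbb{Z}[A^{\pm 1}]$ (obtained in \S\ref{infs}) confirms that this exhausts the free part.

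For the torsion side, the strategy is to read off, level by level, a torsion generator whose annihilator in $\mathbb{Z}[A^{\pm 1}]$ is exactly $(1-A^{2i+4})$ for each $i\geq 0$. The relation $(1-A^6)t=0$ supplies the $i=1$ summand with generator $t$. For $i\geq 2$, one applies Theorem~\ref{mth} at $n=i$ and substitutes the lower-order same-parity powers $t^k$ ($k<i$) by their expressions in terms of the torsion summands already constructed; what remains is a clean $(1-A^{2i+4})$-torsion relation on a suitable combination of $t^i$ with lower-order corrections, producing the generator of the $i$-th summand. The $i=0$ summand, carrying the $(1-A^4)$-torsion, is extracted from the factorization $1-A^8=(1-A^4)(1+A^4)$ applied to the $n=2$ equation $(1-A^8)t^2=-A^8(1-A^4)\cdot 1$, which rearranges to $(1-A^4)\bigl[(1+A^4)t^2+A^8\cdot 1\bigr]=0$ and thereby exhibits a nontrivial $(1-A^4)$-torsion element.

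The main obstacle is to verify that the resulting countable family of torsion generators is independent and exhausts all torsion, i.e.\ that the decomposition is a genuine direct sum with no further hidden relations. The natural approach is by induction on a truncation $N$: for each $N$, one analyses the presentation matrix of the finitely generated submodule generated by $\{1,t,t^2,\ldots,t^N\}$, with the relations inherited from Theorem~\ref{mth} and Equation~(\ref{trsn}), and shows that its Smith normal form has diagonal invariant factors $(1,\,1-A^4,\,1-A^6,\ldots,1-A^{2N+2})$ up to units in $\mathbb{Z}[A^{\pm 1}]$. The key technical input is the coprimality (at the level of cyclotomic factors) of the newly introduced polynomial $1-A^{2N+4}$ with the relations accumulated at lower levels, together with control on the coefficients $a_i,b_i$ of Theorem~\ref{mth} that prevents degeneracies. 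Passing to the direct limit as $N\to\infty$ yields the stated infinite direct sum, and the resulting formula is cross-checked against the one computed by Hoste--Przytycki~\cite{HP1} via classical (non-braid) methods, providing an independent confirmation.
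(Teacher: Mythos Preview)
Your proposal attempts considerably more than the paper actually does. In the paper, this theorem is not given an independent proof: the paragraph preceding it simply reads off the shape of the relations in Theorem~\ref{mth} (each $t^n$ satisfies $(1-A^{2n+4})\,t^n=$ (same-parity lower terms)), asserts that the unknot generates the free part and that the odd powers produce torsion, and then states the direct-sum formula, invoking agreement with Hoste--Przytycki~\cite{HP1} as confirmation. No Smith normal form, no coprimality argument, no explicit change of basis is carried out; the precise invariant-factor decomposition is effectively imported from~\cite{HP1}.

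Your plan to diagonalize the truncated presentation matrix is the natural way to make this rigorous, but there is a genuine obstacle you have not addressed: $\mathbb{Z}[A^{\pm 1}]$ is \emph{not} a PID (for instance the ideal $(2,\,A-1)$ is not principal), so the Smith normal form theorem does not apply and there is no guarantee that a triangular presentation matrix can be brought to diagonal form by elementary operations over this ring. The ``coprimality of cyclotomic factors'' you invoke is a $\mathbb{Q}[A^{\pm 1}]$ phenomenon; over $\mathbb{Z}[A^{\pm 1}]$ it fails (e.g.\ $1-A^2$ and $1+A^2$ both lie in the maximal ideal $(2,\,A-1)$, hence are not comaximal). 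Similarly, your extraction of the $i=0$ summand by factoring $1-A^{8}=(1-A^{4})(1+A^{4})$ only exhibits an element annihilated by $1-A^{4}$; it does not show that this element generates a \emph{direct summand}, which is precisely the kind of splitting that would require a PID-style argument you do not have. Finally, the ``control on the coefficients $a_i,b_i$'' you flag as needed is never supplied in the paper either---those coefficients are left unspecified in Theorem~\ref{mth}---so any self-contained argument along your lines would have to compute them, which the paper does not do. In short: the paper sidesteps all of this by deferring to~\cite{HP1}; your route is the right instinct for an honest proof, but as written the Smith-normal-form step would fail over $\mathbb{Z}[A^{\pm 1}]$ and the coefficient control is missing.
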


Our results agree with that on \cite{HP1}, where KBSM($S^1\times S^2$) is computed with the use of an appropriate basis of Chebyshev polynomials presented in \cite{L}.

\section{Conclusions}

In this paper we compute the Kauffman brakcet skein module of $S^1\times S^2$ using two different methods: the algebraic method based on the generalized Temperley-Lieb algebra of type B, and the diagrammatic method via braids. The algebraic approach was first implemented for the computation of HOMFLYPT skein modules of 3-manifolds via the generalized Hecke algebras of type B (see for example \cite{DL3, DL4, DLP, D3, DGLM}). It was then extended for the computation of Kauffman bracket skein modules of $3$-manifolds via the generalized Temperley-Lieb algebra of type B (see \cite{D0}). The diagrammatic method for computing Kauffman bracket skein modules has been successfully implemented for the case of the handlebody of genus two (\cite{D2}), for the complement of $(2, 2p+1)$- torus knots (\cite{D4}) and for the case of the lens spaces $L(p,q)$ (\cite{D0}). It is also worth mentioning that these techniques have been applied toward the computation of various skein modules of different families of `knotted objects' in (various) 3-manifolds. In particular, in \cite{D7} the algebraic background toward the computation of the HOMFLYPT skein module of the solid torus for singular links is presented (see also \cite{D9}) and in \cite{D8} the notion of skein modules is extended for {\it knotoids}. Finally, in \cite{D6} the algebraic set up toward the computation of skein modules of {\it tied links} in various $3$-manifolds is presented (see also \cite{D5}).



\end{document}